\begin{document}
\newcommand{\commentout}[1]{}

\newcommand{\nwc}{\newcommand}
\nwc{\red}{\color{red}}
\nwc{\blue}{\color{blue}}
\newcommand{\bz}{{\mathbf z}}
\newcommand{\sqk}{\sqrt{\ks}}
\newcommand{\sqkone}{\sqrt{|\k\alpha|}}
\newcommand{\sqktwo}{\sqrt{|\k\beta|}}
\newcommand{\invsqkone}{|\k\alpha|^{-1/2}}
\newcommand{\invsqktwo}{|\k\beta|^{-1/2}}
\newcommand{\partz}{\frac{\partial}{\partial z}}
\newcommand{\grady}{\nabla_{\by}}
\newcommand{\gradp}{\nabla_{\bp}}
\newcommand{\gradx}{\nabla_{\bx}}
\newcommand{\invf}{\cF^{-1}_2}
\newcommand{\myphi}{\tilde\Theta_{(\eta,\rho)}}
\newcommand{\minrg}{|\min{(\rho,\phi^{-1})}|}
\newcommand{\al}{\alpha}
\newcommand{\xvec}{\vec{\mathbf x}}
\newcommand{\kvec}{{\vec{\mathbf k}}}
\newcommand{\lt}{\left}
\newcommand{\ksq}{\sqrt{\ks}}
\newcommand{\rt}{\right}
\newcommand{\ga}{\phi}
\newcommand{\vas}{\varepsilon}
\newcommand{\lan}{\left\langle}
\newcommand{\ran}{\right\rangle}
\newcommand{\tvas}{{W_z^\vas}}
\newcommand{\psiep}{{W_z^\vas}}
\newcommand{\wep}{{W^\vas}}
\newcommand{\weptil}{{\tilde{W}^\vas}}
\newcommand{\wepz}{{W_z^\vas}}
\newcommand{\weps}{{W_s^\ep}}
\newcommand{\wepsp}{{W_s^{\ep'}}}
\newcommand{\wepzp}{{W_z^{\vas'}}}
\newcommand{\wepztil}{{\tilde{W}_z^\vas}}
\newcommand{\vvas}{{\tilde{\ml L}_z^\vas}}
\newcommand{\veptil}{{\tilde{\ml L}_z^\vas}}
\newcommand{\vep}{{{ V}_z^\vas}}
\newcommand{\cvc}{{{\ml L}^{\ep*}_z}}
\newcommand{\cvcp}{{{\ml L}^{\ep*'}_z}}
\newcommand{\cvp}{{{\ml L}^{\ep*'}_z}}
\newcommand{\cvtil}{{\tilde{\ml L}^{\ep*}_z}}
\newcommand{\cvtilp}{{\tilde{\ml L}^{\ep*'}_z}}
\newcommand{\vtil}{{\tilde{V}^\ep_z}}
\newcommand{\ktil}{\tilde{K}}
\newcommand{\n}{\nabla}
\newcommand{\tkappa}{\tilde\kappa}
\newcommand{\Om}{{\Omega}}
\newcommand{\bx}{\mb x}
\nwc{\bv}{\mb v}
\newcommand{\br}{\mb r}
\nwc{\bH}{{\mb H}}
\newcommand{\bu}{\mathbf u}
\nwc{\bxp}{{{\mathbf x}}}
\nwc{\byp}{{{\mathbf y}}}
\newcommand{\bD}{\mathbf D}
\nwc{\bS}{\mathbf S}
\newcommand{\bA}{\mathbf \Phi}
\nwc{\cO}{\mathcal{O}}
\nwc{\co}{\mathcal{o}}
\nwc{\bG}{{\mathbf G}}
\nwc{\bF}{{\mathbf F}}
\nwc{\bd}{{\mathbf d}}
\nwc{\bdhat}{\hat{\mathbf d}}
\nwc{\bR}{{\mathbf R}}
\nwc{\bh}{\mathbf h}
\nwc{\bj}{{\mb j}}
\newcommand{\bB}{\mathbf B}
\newcommand{\bC}{\mathbf C}
\newcommand{\bp}{\mathbf p}
\newcommand{\bq}{\mathbf q}
\newcommand{\by}{\mathbf y}
\nwc{\bI}{\mathbf I}
\nwc{\bP}{\mathbf P}
\nwc{\bs}{\mathbf s}
\nwc{\bX}{\mathbf X}
\nwc{\bZ}{\mathbf Z}
\newcommand{\pdg}{\bp\cdot\nabla}
\newcommand{\pdgx}{\bp\cdot\nabla_\bx}
\newcommand{\one}{1\hspace{-4.4pt}1}
\newcommand{\corr}{r_{\eta,\rho}}
\newcommand{\rinf}{r_{\eta,\infty}}
\newcommand{\rzero}{r_{0,\rho}}
\newcommand{\rzeroinf}{r_{0,\infty}}
\nwc{\om}{\omega}
\nwc{\thetatil}{{\tilde\theta}}

\nwc{\nwt}{\newtheorem}
\nwc{\xp}{{x^{\perp}}}
\nwc{\yp}{{y^{\perp}}}
\nwt{remark}{Remark}
\nwt{lemma}{Lemma}
\nwt{corollary} {Corollary}
\nwt{definition}{Definition} 

\nwc{\ba}{{\mb a}}
\nwc{\bal}{\begin{align}}
\nwc{\be}{\mathbf{e}}
\nwc{\ben}{\begin{equation*}}
\nwc{\bea}{\begin{eqnarray}}
\nwc{\beq}{\begin{eqnarray}}
\nwc{\bean}{\begin{eqnarray*}}
\nwc{\beqn}{\begin{eqnarray*}}
\nwc{\beqast}{\begin{eqnarray*}}

\nwc{\eal}{\end{align}}
\nwc{\ee}{\end{equation}}
\nwc{\een}{\end{equation*}}
\nwc{\eea}{\end{eqnarray}}
\nwc{\eeq}{\end{eqnarray}}
\nwc{\eean}{\end{eqnarray*}}
\nwc{\eeqn}{\end{eqnarray*}}
\nwc{\eeqast}{\end{eqnarray*}}

\nwc{\ep}{\varepsilon}
\nwc{\eps}{\varepsilon}
\nwc{\ept}{\ep }
\nwc{\vrho}{\varrho}
\nwc{\orho}{\bar\varrho}
\nwc{\ou}{\bar u}
\nwc{\vpsi}{\varpsi}
\nwc{\lamb}{\lambda}
\nwc{\Var}{{\rm Var}}

\nwt{proposition}{Proposition}
\nwt{theorem}{Theorem}
\nwt{summary}{Summary}
\nwc{\nn}{\nonumber}
\nwc{\mf}{\mathbf}
\nwc{\mb}{\mathbf}
\nwc{\ml}{\mathcal}

\nwc{\IA}{\mathbb{A}} 
\nwc{\bi}{\mathbf i}
\nwc{\bo}{\mathbf o}
\nwc{\IB}{\mathbb{B}}
\nwc{\IC}{\mathbb{C}} 
\nwc{\ID}{\mathbb{L}} 
\nwc{\IM}{\mathbb{M}} 
\nwc{\IP}{\mathbb{P}} 
\nwc{\II}{\mathbb{I}} 
\nwc{\IE}{\mathbb{E}} 
\nwc{\IF}{\mathbb{F}} 
\nwc{\IG}{\mathbb{G}} 
\nwc{\IN}{\mathbb{N}} 
\nwc{\IQ}{\mathbb{Q}} 
\nwc{\IR}{\mathbb{R}} 
\nwc{\IT}{\mathbb{T}} 
\nwc{\IZ}{\mathbb{Z}} 
\nwc{\pdfi}{{f^{\rm i}}}
\nwc{\pdfs}{{f^{\rm s}}}
\nwc{\pdfii}{{f_1^{\rm i}}}
\nwc{\pdfsi}{{f_1^{\rm s}}}
\nwc{\chis}{{\chi^{\rm s}}}
\nwc{\chii}{{\chi^{\rm i}}}
\nwc{\cE}{{\ml E}}
\nwc{\bE}{{\mathbf E}}
\nwc{\cP}{{\ml P}}
\nwc{\cQ}{{\ml Q}}
\nwc{\cL}{{\ml L}}
\nwc{\cX}{{\ml X}}
\nwc{\cW}{{\ml W}}
\nwc{\cZ}{{\ml Z}}
\nwc{\cR}{{\ml R}}
\nwc{\cV}{{\ml V}}
\nwc{\cT}{{\ml T}}
\nwc{\crV}{{\ml L}_{(\delta,\rho)}}
\nwc{\cC}{{\ml C}}
\nwc{\cA}{{\ml A}}
\nwc{\cK}{{\ml K}}
\nwc{\cB}{{\ml B}}
\nwc{\cD}{{\ml D}}
\nwc{\cF}{{\ml F}}
\nwc{\cS}{{\ml S}}
\nwc{\cM}{{\ml M}}
\nwc{\cG}{{\ml G}}
\nwc{\cH}{{\ml H}}
\nwc{\bk}{{\mb k}}
\nwc{\cbz}{\overline{\cB}_z}
\nwc{\supp}{{\hbox{supp}}}
\nwc{\fR}{\mathfrak{R}}
\nwc{\bY}{\mathbf Y}
\newcommand{\mbr}{\mb r}
\nwc{\pft}{\cF^{-1}_2}
\nwc{\bU}{{\mb U}}
\nwc{\mi}{{\rm i}}
\nwc{\bT}{{\mb T}}
\nwc{\IL}{\mathbb{L}}
\nwc{\mbx}{\mathbf{X}}
\nwc{\bb}{\mathbf{Y}}
\nwc{\bPhi}{\mathbf{\Phi}}
\nwc{\mbe}{\mathbf{E}}
\newcommand{\ind}{\operatorname{I}}

\title{
TV-min and Greedy Pursuit for Constrained Joint Sparsity
and Application to Inverse Scattering}
\author{Albert  Fannjiang}
\email{
fannjiang@math.ucdavis.edu}

       \address{
   Department of Mathematics,
    University of California, Davis, CA 95616-8633}
    
    \begin{abstract}
   This paper proposes a general framework  for compressed sensing of constrained joint sparsity (CJS) which  includes  total variation minimization (TV-min) as an example.  TV- and 2-norm error bounds, independent of the ambient dimension,  are derived 
for the CJS version of Basis Pursuit and Orthogonal Matching Pursuit. 
As an application the
   results  extend  Cand\`es, Romberg  and Tao's 
 proof of exact recovery
of piecewise constant objects
with noiseless incomplete Fourier data to the case of noisy data.

    \end{abstract}
    \maketitle 
    
    \section{Introduction}
    
 One of the most significant developments in imaging and signal processing of the last decade  is compressive sensing (CS) which promises reconstruction with fewer data  
 than the ambient dimension.
The  CS capability \cite{ CT, Don} hinges  on  favorable sensing matrices and enforcing 
    a key  prior knowledge, i.e. sparse objects. 
    
    Consider the linear inverse problem $Y=\bA X+E$ where
$X\in \IC^m$ is  the {\em sparse} object vector  to be recovered,
$Y\in \IC^n$ is the measurement data vector and
$E\in \IC^n$ represents  the (model or external) errors.
The great insight of CS is that the  sparseness of $X$, as measured by the sparsity 
$\|X\|_0\equiv \#$ nonzero elements in $X$, can be effectively enforced by 
L1-minimization (L1-min)  \cite{CDS, DH}
\beq
\min \|Z\|_1 \quad \text{subject to (s.t.)} \quad  \|\bA Z-Y  \|_2\leq \|E\|_2
\label{L1}
\eeq
with favorable sensing matrices $\bPhi$. 

The L1-min idea dates back to  geophysics
research in 1970's \cite{CM, TBM}. 
The L1-minimizer is often a much better approximation to
the sparse object than the traditional minimum energy solution 
via $L2$-minimization because $1$-norm  is closer to
$\|\cdot\|_0$ than the $2$-norm.  Moreover, the L1-min principle is a convex optimization problem
and can be efficiently computed. The L1-min principle 
is effective in recovering the sparse object  with the number of data $n$ much less than
the ambient dimension $m$ if the sensing matrix
$\bA$ satisfies some favorable conditions such as
the restricted isometry property (RIP) \cite{CT}:
$\bA$ is said to satisfy RIP of order $k$ if
\beq 
\label{rip1}
(1-\delta_{k})\|Z\|_{2,2}^2\leq
\|\bA Z\|_{2}^2\leq (1+\delta_{k})\|Z\|_{2}^2
\eeq
for any $k$-sparse vector $Z$ where the minimum of such constant $\delta_k$ is
the restricted isometry constant (RIC) of order $k$. 

The drawback of 
 RIP is that only a few special types of matrices are known to satisfy RIP, including independently and identically  distributed (i.i.d.) random matrices and random partial Fourier matrices
formed by random row selections  of the
discrete Fourier transform. 

A more practical alternative  CS criterion is furnished by the 
 incoherence property as measured by one minus  the mutual coherence
\beq
\label{mu}
\mu({\mathbf \Phi})=\max_{i\neq j} {\lt|\sum_{k}\Phi^*_{ik}
\Phi_{kj}\rt| \over \sqrt{\sum_{k}|\Phi_{ki}|^2}\sqrt{
\sum_{k}|\Phi_{kj}|^2} }
\eeq
 \cite{DE, Tro}.
 
A parallel development  in image denoising   pioneered by Osher and coworkers \cite{RO, ROF} seeks to enforce  edge detection by  total variation
minimization (TV-min)
\beq
\label{tv0}
\min \int |\nabla g| \quad \text{s.t.} \quad  \int |g-f |^2 \leq \ep^2
\eeq
where $f$ is the noisy image and $\ep$ is the noise level. 
The idea is that for the class of piecewise constant functions, the gradient is sparse and
can be effectively enforced by TV-minimization. 

For digital images, TV-min approach to deblurring  can be formulated
as follows. Let ${ f} \in \IC^{p\times q}$ be  a noisy
complex-valued data of $p\times q$ pixels. 
Let $T$ be the transformation from the true object 
to the ideal sensors, modeling the
imaging process. 
Replacing the total variation in (\ref{tv0})  by  the discrete total variation
\beqn
\|g\|_{\rm TV}&\equiv &\sum_{i,j} \sqrt{|\Delta_1 g(i,j)|^2+ |\Delta_2 g(i,j)|^2},\\
\Delta g= (\Delta_1 {g}, \Delta_2 {g})(i,j)&\equiv &(g(i+1,j)-g(i,j), g(i,j+1)-g(i,j))\label{dg}
\eeqn
 we obtain 
\beq
\label{tv1}
\min \|g\|_{\rm TV} \quad \text{s.t.} \quad  \|T { g}-{f} \|_2\leq \ep 
\eeq
 cf. \cite{CL, CS}. 

In a breakthrough paper \cite{CRT}, Cand\`es {\em et al.} 
show the equivalence of (\ref{tv1}) to (\ref{L1})  for  a random partial Fourier matrix with noiseless data ($\ep=0$)
 and obtain a performance guarantee of 
{exact } reconstruction of {piece-wise constant} objects
from (\ref{tv1}). 

A main application of this present work is to extend the result of \cite{CRT} to inverse scattering with noisy data. 
In this context
it is natural to work with the continuum setting in which
the object is a vector in an infinite dimensional function space,
e.g. $L^2(\IR^d)$. To fit into the CS's discrete framework, we
discretize the object function by pixelating the ambient space
with a regular grid of equal spacing  $\ell$.

 The grid spacing $\ell$  can be thought of as the resolution length, the fundamental parameter  of the discrete model from which all
other parameters are derived. 
For example, the total number of resolution cells is proportional to
$\ell^{-d}$, i.e. $m=\cO(\ell^{-d})$.  As we will assume
that the original object is well approximated by
the discrete model in the limit $\ell\to 0$, 
the sparsity $s$ of the edges of a piecewise constant object
is proportional to $\ell^{1-d}$, i.e. the object is non-fractal. 
It is important to keep in mind the continuum origin of
the discrete model  in order
to avoid confusion about the small 
$\ell$ limit throughout the paper.

{
 First we introduce the notation for multi-vectors $\bY\in \IC^{n\times d}$ 
\beq
\|\bY\|_{b,a}&=&\Big(\sum_{j=1}^n  \|\hbox{\rm row}_j (\bY)\|^b_a\Big)^{1/b},\quad a, b\geq 1\label{90}
\eeq
where $\hbox{\rm row}_j(\bY)$ is the $j$th { row } of $\bY$. 
The 2,2-norm  is exactly the Frobenius norm. To avoid  confusion with the {\em subordinate} matrix norm
\cite{GV},  it is more convenient to view $\bY$ as multi-vectors rather
than a matrix. 

 We aim at  the following  error
 bounds. 
 Let $V$ be the discretized  object 
 and $\hat V$ an estimate of $V$. We will propose
 a compressive sampling scheme that 
 leads to the error bound for the TV-minimizer $\hat V$
 \beq
 \label{v}
 \|\Delta V-\Delta\hat V\|_{2,2} =\cO(\ep),\quad \ell \to 0
 \eeq
implying 
via the discrete Poincare inequality that
 \beq
 \label{vg'}
 \|V-\hat V\|_{2} =\cO({\ep\over {\ell}}) 
 \eeq
{\em  independent of the ambient dimension}.

 If
 $\hat V$ is the reconstruction by using  a  version of the greedy algorithm, Orthogonal Matching Pursuit (OMP)  \cite{DMA, PRK}, for multi-vectors
 then in addition to (\ref{v}) we also have
 \beq
 \label{vg}
 \|V-\hat V\|_{2} =\cO({\ep\over \sqrt{\ell}}) 
 \eeq
{\em  independent of the ambient dimension} (Section \ref{sec:omp}).  We do not know if 
the bound (\ref{vg}) applies to the TV-minimizer.

A key advantage of the greedy algorithm used to prove (\ref{vg})  is the exact recovery of
the gradient support (i.e. the edge location) under
proper conditions  (Theorem \ref{thm4}, Section \ref{sec:omp}). On the one hand,  TV-min requires
fewer data for recovery: $\cO(s) $ for TV-min under RIP
versus $\cO(s^2)$ for 
the greedy algorithm under incoherence where
 the sparsity $s=\cO(\ell^{1-d})$  as already mentioned. 
 On the other hand, the greedy algorithm is computationally
more efficient and incoherent measurements  are much easier to design and verify
than RIP.

\commentout{
In \cite{cis-siso} we propose  and analyze inverse scattering method capable of compressive imaging.  The method employs  randomly and repeatedly (multiple-shot)  the single-input-single-output (SISO)
measurements capable of 
reconstructing  objects of  sufficiently low sparsity in appropriate bases. 

 In this brief note, we extend the  result of \cite{cis-siso} to
 the case of piecewise smooth objects and give a compressed sensing analysis and performance guarantee for the total variation deblurring/denoising principle applied
 to inverse scattering. 
A key idea of our approach is to transform the total
variation minimization (TVM)  \cite{Cha, CL, CGM, CS, RO, ROF}  in inverse scattering to a new version of Basis Pursuit Denoising (BPDN) for compressed sensing  of constrained
joint sparsity (CJS)}

At heart our theory is based on reformulation of
TV-min as CS of  joint sparsity with linear constraints (such as
curl-free constraint in the case of TV-min): BPDN for constrained joint sparsity (CJS) is formulated as
\beq
\min \|\bZ\|_{1,2},\quad\hbox{s.t.}
\quad \|\bY-\varphi(\bZ)\|_{2,2}\leq \ep,\quad \cL\bZ=0 
\label{bp1}
\eeq
where 
\[
\varphi(\bZ)=\lt[\bA_1 Z_1,\ldots, \bA_d Z_d\rt],\quad Z_j=\hbox{the $j$th column of $\bZ$}
\]
where $\cL$ represents a linear constraint. 
Without loss of generality, we assume the matrices $\{\bA_j\}\subset \IC^{n\times m}$  all have {\em unit 2-norm} columns.

In connection to TV-min, $Z_j$ is the $j$-th directional gradient of the discrete object $V$. And from the definition of discrete gradients, it is clear 
that every measurement of $Z_j$ can be deduced from 
two measurements of the object $V$, slightly shifted in
the $j$-th direction  with respect to each other.
 As shown below, for inverse scattering  $\bA_j=\bA,\forall j$ and 
$\cL$ is the curl-free constraint which takes the form 
\[
\Delta_1 Z_2=\Delta_2 Z_1
\]
 for $d=2$ (cf.  (\ref{curl})). 
Our main results, Theorem \ref{thm:bp2} and Theorem \ref{thm4}, constitute performance guarantees for CJS
based, respectively,  on RIP and incoherence of the measurement matrices $\bA_j$. 

\subsection{Comparison of existing theories}
The gradient-based method of \cite{PMG} 
modifies the original Fourier measurements to obtain Fourier measurements of the corresponding vertical and horizontal edge images which then are separately reconstructed by the standard CS algorithms. This approach attempts to take  advantage of usually lower {\em separate}
sparsity and is different from TV-min.  Nevertheless, a similar 2-norm error bound (Proposition V.2, \cite{PMG}) to
(\ref{vg'}) is obtained. 

 Needell and Ward \cite{NW} 
obtain interesting  results for the {\em anisotropic} total variation (ATV) minimization  
in terms of the objective function 
\[
\|g\|_{\rm ATV}\equiv \sum_{i,j} |\Delta_1 g(i,j)|+ |\Delta_2 g(i,j)|.
 \]
 While for {\em real}-valued objects in two dimensions,
  the isotropic TV semi-norm is equivalent to
  the anisotropic version, the two semi-norms are, however,  not the same
  in dimension $\geq 3$ and/or 
  for complex-valued objects. A rather remarkable result of  \cite{NW}
  is the bound $ \|V-\hat V\|_{2} =\cO({\ep})$, modulo
  a logarithmic factor, for $d=2$.  
This is achieved by  proving  a strong Sobolev inequality for two dimensions under the additional assumption of
  RIP with respect to the bivariate Haar transform.
  Unfortunately, 
  this latter assumption prevents the results in 
  \cite{NW} from being directly applicable to structured measurement matrices such as Fourier-like matrices  
which typically
 have high mutual coherence with any compactly supported wavelet
 basis when adjacent  subbands are present.  
Their approach also does not guarantee  exact recovery of the gradient support. 

It is worthwhile to further consider 
these existing approaches from the perspective of the CJS framework
for 
arbitrary $d$. 
The approach of \cite{PMG} can be reformulated as
 solving  $d$ standard BPDN's
\[
\min \|Z_\tau\|_{1},\quad\hbox{s.t.}
\quad \|Y_\tau-\bA Z_\tau\|_{2}\leq \ep, \quad \tau=1,\ldots, d. 
\]
 {\em separately without}  the curl-free constraint $\cL$ where
 $Z_\tau$ and $Y_\tau$ are, respectively,  the $\tau$-th columns of $\bZ$ and $\bY$. To recover the original image from the directional gradients,
 an additional step of consistent integration  becomes an important part of the approach
 in \cite{PMG}.  
 
 From the CJS perspective, the ATV-min considered
 in \cite{NW} can be reformulated  as follows. 
 Let $\tilde Z\in \IC^{dm}$  be the image gradient vector by stacking the $d$ directional gradients   and let $\tilde Y\in \IC^{dn}$ be the similarly concatenated data vector.
 Likewise let $\tilde\bA=\hbox{diag}(\bA_1,\ldots,\bA_d)\in \IC^{dn\times dm} $ be the 
 block-diagonal matrix with blocks $\bA_j\in \IC^{n\times m}$. Then ATV-min is equivalent to
 BPDN for a {\em single} constrained and concatenated  vector 
\beq
\min \|\tilde Z\|_{1},\quad\hbox{s.t.}
\quad \|\tilde Y-\tilde\bA\tilde Z\|_{2}\leq \ep,\quad \tilde\cL\tilde Z=0.
\label{bp0}
\eeq
where $\tilde \cL$ is the same constrain $\cL$ reformulated
for concatenated vectors. 
Repeating {\em verbatim} the proofs of
Theorems \ref{thm:bp2} and \ref{thm4} we obtain
the same error bounds as (\ref{v})-(\ref{vg}) for ATV-min as formulated in  (\ref{bp0}) under the same conditions
for $\bA_j$ separately. 

ATV-min is  formulated
differently  in \cite{NW}. Instead of image gradient,  it is formulated 
in terms of the image to get rid of the curl-free constraint. To  proceed 
the differently  concatenated  matrix $[\bA_1,\ldots,\bA_d]$ is then assumed  to satisfy  
RIP of higher order   
demanding $2dn$ measurement data.  
For $d=2$, RIP of order $5s$ with $\delta_{5s}<1/3$
is assumed for $[\bA_1,\bA_2]$ in \cite{NW} which is  much
more stringent than RIP of order $2s$
with 
$\delta_{2s}<\sqrt{2}-1$ for $\bA_1,\bA_2$ {\em separately} in (\ref{bp0}). In particular, $\bA_1=\bA_2$  is allowed for 
(\ref{bp0}) but not for \cite{NW}. 
To get the  favorable $\cO(\ep)$ 2-norm error bound for $d=2$, additional 
measurement matrix satisfying RIP with respect to
the bivariate Haar basis is needed, which, as mentioned 
above,  excludes partial Fourier measurements. 

\subsection{Organization}
The rest of the paper is organized as follows.  In Section \ref{sec5}, we present a performance guarantee
  for BPDN for CJS and obtain
  error bounds.
  In Section \ref{sec:omp}, we analyze the greedy approach
  to sparse recovery of CJS and derive error bounds,
  including 
 an improved 2-norm error bound. In Section \ref{sec2}, we review
  the scattering problem starting from the continuum setting
  and introduce the discrete model.  In Section \ref{sec3},
  we discuss various sampling schemes including
  the forward and backward sampling schemes for inverse scattering for point objects. In Section \ref{sec:tv}
  we formulate TV-min for piecewise constant objects
  as BPDN for CJS. 
  We present numerical examples and conclude in Section \ref{sec7}. 
 We present the proofs in the Appendices. 

\section{BPDN for CJS}\label{sec5}

\commentout{
For $\bZ\in \IC^{m\times p}$ define the notation 
\beq
\|\bZ\|_{b,a}&=&\Big(\sum_{j=1}^m  \|\hbox{\rm row}_j (\bZ)\|^b_a\Big)^{1/b},\quad a, b\geq 1\label{90}
\eeq
where $\hbox{\rm row}_j(\bZ)$ is the $j$th { row } of $\bZ$. 
{
The 2,2-norm  is exactly the Frobenius norm 
of jointly sparse multi-vectors as a  matrix.} The notation in (\ref{90}) should not be confused with the {\em subordinate} matrix norm
\cite{GV}. 
}

Consider the linear inversion problem 
\beq
\label{200}
\bY=\varphi(\bX)+\bE,\quad
\cL\bX=0
\eeq
where 
\[
 \varphi(\bX)=[\bA_1X_1, \bA_2X_2,\ldots,\bA_dX_d],\quad \bA_j\in \IC^{n\times m}
 \] and the corresponding 
 BPDN 
\beq
\min \|\bZ\|_{1,2},\quad\hbox{s.t.}
\quad \|\bY-\varphi(\bZ)\|_{2,2}\leq \ep=\|\bE\|_{2,2},\quad \cL\bZ=0.
\label{bp2}
\eeq
 For TV-min in $d$ dimensions, $\bA_j=\bA,\forall j$, $\bX$ represents
 the discrete gradient of the unknown object $V$ and 
$\cL$ is the curl-free constraint. 
Without loss of generality, we assume the matrices $\{\bA_j\}$
all have unit 2-norm columns.

{
 We say that $\bX$ is $s$-row sparse if the number of
 nonzero rows in $\bX$ is at most $s$. With a slight abuse of terminology we call
 $\bX$ the object (of CJS). 
 
 In the following theorems, we let the object $\bX$ be general,  not necessarily $s$-row sparse. Let $\bX^{(s)}$ consist of $s$ largest rows in the 2-norm of $\bX$. Then
$\bX^{(s)}$ is the  best $s$-row sparse approximation of $\bX$. }
 
\begin{theorem}\label{thm:bp2}
Suppose that the linear map $\varphi$ satisfies the RIP of order
$2s$ 
\beq 
\label{rip}
(1-\delta_{2s})\|\bZ\|_{2,2}^2\leq
\|\varphi(\bZ)\|_{2,2}^2\leq (1+\delta_{2s})\|\bZ\|_{2,2}^2
\eeq
for any $2s$-row sparse  $\bZ$
with
\[
\delta_{2s}<\sqrt{2}-1.
\]
 
Let $\hat\bX$ be the minimizer of (\ref{bp2}). Then
 \beq
 \|\hat \bX-\bX\|_{2,2}&\leq & C_1s^{-1/2}\|\bX-\bX^{(s)}\|_{1,2}+C_2\ep\label{70}
 \eeq
 for absolute  constants $C_1, C_2$ depending only on $\delta_{2s}$.  

\end{theorem}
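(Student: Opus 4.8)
The plan is to carry out the Cand\`es restricted-isometry argument for Basis Pursuit Denoising, replacing each scalar coordinate by a row of the multi-vector and the modulus on $\IC$ by the Euclidean norm on $\IC^d$. The point is that $\|\cdot\|_{1,2}$ and $\|\cdot\|_{2,2}$ are honest norms, additive over disjoint row-sets, so the combinatorial and geometric estimates of the classical proof go through with rows in place of coordinates; the linear constraint $\cL\bZ=0$ enters only through the feasibility of $\bX$ and $\hat\bX$ in (\ref{bp2}) and imposes no extra work.

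First I would set $\bH=\hat\bX-\bX$ and let $T_0$ be the set of at most $s$ rows supporting $\bX^{(s)}$. I would then partition the remaining rows $T_0^c$ into $T_1,T_2,\dots$, where $T_1$ indexes the $s$ rows of $\bH$ restricted to $T_0^c$ of largest Euclidean norm, $T_2$ the next $s$, and so on. From $\|\hat\bX\|_{1,2}\le\|\bX\|_{1,2}$ and the row-wise triangle inequality one gets the cone inequality $\|\bH_{T_0^c}\|_{1,2}\le \|\bH_{T_0}\|_{1,2}+2\|\bX-\bX^{(s)}\|_{1,2}$; the monotone-rearrangement estimate underlying the ``shelling'' bound gives $\sum_{j\ge 2}\|\bH_{T_j}\|_{2,2}\le s^{-1/2}\|\bH_{T_0^c}\|_{1,2}$; and Cauchy--Schwarz on the $s$ rows of $T_0$ gives $\|\bH_{T_0}\|_{1,2}\le s^{1/2}\|\bH_{T_0}\|_{2,2}$. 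These reduce the whole problem to bounding $\|\bH_{T_0\cup T_1}\|_{2,2}$.

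For that I would use (\ref{rip}). Since $\varphi$ is linear and $\|\cdot\|_{2,2}$ is induced by the Frobenius inner product, the usual parallelogram argument yields the near-orthogonality bound $|\langle\varphi(\bU),\varphi(\bV)\rangle|\le\delta_{2s}\|\bU\|_{2,2}\|\bV\|_{2,2}$ whenever $\bU,\bV$ have disjoint row-supports of total size at most $2s$. Writing $\bH_{T_0\cup T_1}=\bH-\sum_{j\ge 2}\bH_{T_j}$, expanding $\|\varphi(\bH_{T_0\cup T_1})\|_{2,2}^2$, using $\|\varphi(\bH)\|_{2,2}\le\|\varphi(\hat\bX)-\bY\|_{2,2}+\|\bY-\varphi(\bX)\|_{2,2}\le 2\ep$ (feasibility of $\hat\bX$, and $\|\bY-\varphi(\bX)\|_{2,2}=\|\bE\|_{2,2}=\ep$), together with $\|\bH_{T_0}\|_{2,2}+\|\bH_{T_1}\|_{2,2}\le\sqrt2\,\|\bH_{T_0\cup T_1}\|_{2,2}$ and the lower RIP bound, I expect to obtain
\[
(1-\delta_{2s})\|\bH_{T_0\cup T_1}\|_{2,2}\le 2\sqrt{1+\delta_{2s}}\,\ep+\sqrt2\,\delta_{2s}\,s^{-1/2}\|\bH_{T_0^c}\|_{1,2}.
\]
Inserting the cone inequality and $\|\bH_{T_0}\|_{1,2}\le s^{1/2}\|\bH_{T_0\cup T_1}\|_{2,2}$ turns this into a self-improving inequality whose right-hand coefficient of $\|\bH_{T_0\cup T_1}\|_{2,2}$ is $\sqrt2\,\delta_{2s}/(1-\delta_{2s})$; this is $<1$ precisely when $\delta_{2s}<\sqrt2-1$, which is exactly the hypothesis, so the term absorbs and one gets $\|\bH_{T_0\cup T_1}\|_{2,2}\le C(\delta_{2s})\big(\ep+s^{-1/2}\|\bX-\bX^{(s)}\|_{1,2}\big)$. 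Finally $\|\bH\|_{2,2}\le\|\bH_{T_0\cup T_1}\|_{2,2}+\sum_{j\ge 2}\|\bH_{T_j}\|_{2,2}$ and one more application of the shelling and cone bounds yield (\ref{70}).

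The only thing needing genuine (if routine) checking — the main obstacle — is that every scalar step of the classical argument really does survive the passage from coordinates to rows: the triangle and Cauchy--Schwarz inequalities relating $\|\cdot\|_{1,2}$ and $\|\cdot\|_{2,2}$ on a fixed set of $s$ rows, the rearrangement estimate behind shelling, and, least automatic, the polarization identity behind the near-orthogonality lemma, which is valid here only because $\varphi$ is linear and $\|\cdot\|_{2,2}$ comes from an inner product. No property of $\cL$ beyond the admissibility of $\bX$ is ever used, so the argument is insensitive to the particular linear constraint.
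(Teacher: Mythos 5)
Your proposal is correct and follows essentially the same route as the paper's Appendix A: the same row-shelling decomposition, cone inequality, parallelogram-based near-orthogonality lemma, and the self-improving bound with coefficient $\sqrt{2}\,\delta_{2s}/(1-\delta_{2s})$ absorbed under $\delta_{2s}<\sqrt{2}-1$. The one cosmetic difference is that the paper states the near-orthogonality bound for the real part of the inner product (which is all the expansion actually needs), whereas you claim it for the modulus; both are fine.
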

\begin{remark}
Note that the RIP for joint sparsity (\ref{rip}) 
follows straightforwardly from the assumption of separate RIP
\beqn 
(1-\delta_{2s})\|Z\|_{2}^2\leq
\|\bA_jZ\|_{2}^2\leq (1+\delta_{2s})\|Z\|_{2}^2,\quad \forall j
\eeqn
with  a common RIC. 
\end{remark}
\begin{remark}
For the standard Lasso with a particular choice
of regularization parameter, Theorem 1.3 of  \cite{CP} guarantees 
exact support recovery under a favorable sparsity constraint. 
In our setting and notation, their TV-min principle corresponds to 
\beq
\min_{\cL\bZ=0} \lambda \sigma \|\bZ\|_{1,2}+{1\over 2}
\|\bY-\varphi(\bZ)\|_{2,2}^2,\quad \lambda=2\sqrt{2\log m}
\label{lasso}
\eeq
where $\sigma^2=\ep^2/(2n)$ is the variance of
the assumed Gaussian  noise in each entry of $\bY$.
 Unfortunately, even  if the result of \cite{CP} can
be extended to (\ref{lasso}), it 
is inadequate for our purpose because \cite{CP} assumes 
independently selected support and signs, which
is clearly not satisfied by the gradient of
a piecewise constant object. 
\end{remark}

The proof of Theorem \ref{thm:bp2}  is given in Appendix A. 

The error bound (\ref{70}) implies (\ref{v})
for $s$-row sparse $\bX$. For the 2-norm bound
(\ref{vg'}), we apply
 the discrete Poincare inequality \cite{Che}
\[
\|f\|_2^2\leq {m^{2/d}\over 4d} \|\Delta f\|_2^2
\]
to get 
\beq
\label{80}
\|V-\hat V\|_2\leq {m^{1/d}\over 2{d}^{1/2}} C_2\ep=\cO({\ep\over \ell}).
\eeq

\section{Greedy pursuit for CJS}
\label{sec:omp}

One idea to improve the error bound  is through 
exact recovery of the support.
This can be achieved by greedy algorithms. As before, we consider the general linear inversion
with CJS  (\ref{200}) with
$\|\bE\|_{2,2}=\ep$. 

\bigskip

Our following algorithm is an extension of the joint-sparsity greedy algorithms  of
\cite{MMV05,CH06,Tropp} to the setting with  multiple sensing matrices. 
\begin{center}
   \begin{tabular}[width=5in]{l}
   \hline
   \centerline{{\bf Algorithm 1.}\quad  OMP for joint sparsity} \\ \hline
   Input: $\{\bA_j\}, \bb,\eta>0$\\
 Initialization:  $\mbx^0 = 0, \bR^0 = \bb$ and $\cS^0=\emptyset$ \\ 
Iteration:\\
\quad 1) $i_{\rm max} = \hbox{arg}\max_{i}\sum^d_{j=1}|\Phi^*_{j,i}R^{k-1}_j |,\hbox{where $\Phi^*_{j,i}$ is the conjugate transpose  of $i$-th column of $\bPhi_j$} $\\
 \quad      2) $\cS^k= \cS^{k-1} \cup \{i_{\rm max}\}$ \\
  \quad  3) $\mbx^k = \hbox{arg} \min\|
     \bA \bZ-\bY\|_{2,2}$ s.t. \hbox{supp}($\bZ$) $\subseteq S^k$ \\
  \quad   4) $\bR^k = \bb- \varphi(\mbx^k)$\\
\quad  5)  Stop if $\sum_j\|R^k_j\|_{2}\leq \ep$.\\
 Output: $\bX^k$. \\
 \hline
   \end{tabular}
\end{center}
\bigskip

Note that the linear constraint is not enforced in Algorithm 1.

A natural indicator of the performance of OMP is
the mutual coherence (\ref{mu}) \cite{Tro, DET}.  
Let
\[
\mu_{\rm max}=\max_{j}\mu(\bA_j). 
\]
Then analogous to Theorem 5.1 of \cite{DET}, we have
the following performance guarantee.
\begin{theorem}  Suppose the sparsity $s$ satisfies 
\beq
\label{sparse}
s<{1\over 2} (1+{1\over \mu_{\rm max}})-{\sqrt{d}\ep\over \mu_{\rm max} X_{\rm min}},\quad X_{\rm min}=\min_{k} \|\hbox{\rm row}_k (\bX)\|_1.
\eeq
 Let $\bZ$ be the output of Algorithm 1, with the stopping rule that the residual drops to the level $\ep$ or below.  Then  $\supp (\bZ)=\supp (\bX)$.

 Let $\hat \bX$ solve
the least squares problem
\beq
\label{ls2}
\hat\bX=\hbox{\rm arg} \,\min_{\bB}\|\bY-\bPhi\bB\|_{2,2},\quad \hbox{s.t.}\quad  \supp (\bB)\subseteq \supp (\bX),\quad \cL\bB=0.
\eeq
Then
\beq
\label{err}\|\hat \bX-\bX\|_{2,2}
\leq {2\ep \over \sqrt{1-\mu_{\rm max}(s-1)}}.
\eeq
\label{thm4}
\end{theorem}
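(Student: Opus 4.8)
The plan is to mirror the classical OMP-with-noise analysis of Donoho–Elad–Temlyakov (Theorem 5.1 of \cite{DET}), adapted to the multi-vector / multiple-sensing-matrix setting by replacing scalar inner products with the aggregated correlation $\sum_{j=1}^d |\Phi_{j,i}^* R_j^{k-1}|$ that drives the selection step of Algorithm 1. The argument splits naturally into two parts: (i) an inductive proof that every index selected by the greedy step lies in $\supp(\bX)$, so that after $\le s$ steps the residual has dropped to $\ep$ and the support is recovered exactly; (ii) a perturbation estimate for the constrained least-squares solution $\hat\bX$ of (\ref{ls2}) once the correct support is known.

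For part (i), I would argue by induction on the iteration count $k$, maintaining the invariant $\supp(\mbx^{k-1})\subseteq\supp(\bX)$. Writing $\bR^{k-1}=\varphi(\bX-\mbx^{k-1})+\bE$ and letting $\Lambda=\supp(\bX)$ with $|\Lambda|\le s$, the key is to show the maximum aggregated correlation over $i\in\Lambda$ strictly exceeds that over $i\notin\Lambda$. The ``signal'' part of the correlation is controlled from below on $\Lambda$ and from above off $\Lambda$ using the near-orthogonality encoded in $\mu_{\rm max}$: for a vector supported on $\le s$ rows, a Gershgorin-type bound gives the diagonal block of the Gram operator $\ge 1-\mu_{\rm max}(s-1)$ and the off-support leakage $\le \mu_{\rm max} s$ (per coordinate, after the right normalization). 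The ``noise'' part contributes at most $\sqrt{d}\,\ep$ to each aggregated correlation, by Cauchy–Schwarz across the $d$ sensing matrices together with $\|\Phi_{j,i}\|_2=1$ and $\sum_j\|E_j\|_2^2=\ep^2$. Comparing the worst on-support index with the best off-support index and demanding the on-support one win yields exactly the sparsity condition (\ref{sparse}); here $X_{\rm min}=\min_k\|\mathrm{row}_k(\bX)\|_1$ enters as the floor on the residual energy concentrated on $\Lambda$ that has not yet been cancelled. One then checks that the stopping rule $\sum_j\|R_j^k\|_2\le\ep$ cannot trigger prematurely while a true-support atom remains uncaptured (again via the RIP-free, coherence-based lower bound $1-\mu_{\rm max}(s-1)>0$ on the restricted Gram operator), and must trigger by step $|\Lambda|$, giving $\supp(\bZ)=\supp(\bX)$.

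For part (ii), once $\supp(\hat\bX)\subseteq\Lambda=\supp(\bX)$ the error $\hat\bX-\bX$ is itself supported on $\Lambda$, hence is $s$-row sparse, and $\varphi(\hat\bX-\bX)=\bY-\bE-\varphi(\hat\bX)=(\bY-\varphi(\hat\bX))-\bE$, where the least-squares residual $\bY-\varphi(\hat\bX)$ has $2,2$-norm $\le\ep$ by optimality (the true $\bX$ is feasible for (\ref{ls2}) since it satisfies $\cL\bX=0$). Thus $\|\varphi(\hat\bX-\bX)\|_{2,2}\le 2\ep$. Applying the lower coherence bound $\|\varphi(\bZ)\|_{2,2}^2\ge(1-\mu_{\rm max}(s-1))\|\bZ\|_{2,2}^2$ valid for $s$-row sparse $\bZ$ (the coherence analogue of the lower RIP inequality, again Gershgorin on the restricted Gram operator) gives $\|\hat\bX-\bX\|_{2,2}\le 2\ep/\sqrt{1-\mu_{\rm max}(s-1)}$, which is (\ref{err}).

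The main obstacle I anticipate is bookkeeping the passage from scalar coherence estimates to the aggregated quantity $\sum_{j=1}^d|\Phi_{j,i}^* R_j^{k-1}|$ cleanly: one must track how the factor $\sqrt{d}$ appears in the noise term (via Cauchy–Schwarz over the $d$ blocks) without it also degrading the signal term, and ensure the same restricted-Gram lower bound $1-\mu_{\rm max}(s-1)$ governs each $\bA_j$ uniformly through $\mu_{\rm max}=\max_j\mu(\bA_j)$. The linear constraint $\cL$ plays no role in the greedy selection (as the text notes) and is only used at the very end to guarantee feasibility of $\bX$ in the least-squares step, so it does not complicate the coherence argument itself.
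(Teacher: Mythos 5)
Your proposal follows essentially the same route as the paper: an inductive coherence comparison of on-support versus off-support aggregated correlations (with the noise entering as $\sum_j\|E_j\|_2\le\sqrt{d}\,\ep$ and the row $1$-norms $X_{\rm max},X_{\rm min}$ controlling the signal term), a check that the stopping rule fires exactly at step $s$, and then the triangle inequality $\|\varphi(\hat\bX-\bX)\|_{2,2}\le 2\ep$ combined with the Gershgorin singular-value bound $\sqrt{1-\mu_{\rm max}(s-1)}$ on restricted column submatrices. The only cosmetic difference is that the paper packages the induction step through the Donoho--Elad--Temlyakov uniqueness lemma for the residual signal, whereas you work directly with $\bR^{k-1}=\varphi(\bX-\mbx^{k-1})+\bE$; these are equivalent.
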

\commentout{
\begin{remark}
The least squares solution $\hat \bX$ is not necessarily 
the discrete gradient of the following
least squares solution on the level of object itself
\beq
\label{ls3}
\hat X=\hbox{\rm arg} \,\min_{Z}\|Y-\bPhi Z\|_{2}
\eeq
where the gradient of $Z$ is supported on $\supp(\bX)$, since
the objective functions are different.
\end{remark}
\begin{remark}
In the case of a gradient object $\bX$ 
other  practical methods
of reconstructing the original object from the recovered
gradient $\hat\bX$ are discussed in \cite{PMG}.
\end{remark}
}

The proof of Theorem \ref{thm4} is given in Appendix B.

The main advantage of Theorem \ref{thm4} over Theorem \ref{thm:bp2} is the guarantee of exact recovery of the support of $\bX$. Moreover,  a better 
2-norm error bound follows because now
the gradient error is guaranteed  to vanish outside a set
of cardinality  $\cO(\ell^{1-d})$: 
Let $\ell\IL_l\subseteq \ell\IL, l=1,..., L$ be the 
level sets of the object $V$ such that  
\[
V=\sum_{l=1}^L v_l \ind_{\ell\IL_l}
\]
where $\IL_l\cap\IL_k=\emptyset, l\neq k, \IL=\cup_{l}\IL_l$. 
The reconstructed object $\hat V$ from  $\hat\bX$
given in (\ref{ls2})
also takes the same form
\[
\hat V=\sum_{l=1}^L \hat v_l \ind_{\ell\IL_l}.
\]
To fix the undetermined constant, we may assume that $v_1=\hat v_1$. Since 
\[
\|\Delta (V-\hat V)\|_{2,2}=\cO(\ep)
\]
 by (\ref{err}) and the gradient error occurs only on 
the boundaries of $\ell\IL_l$ of cardinality  $\cO(\ell^{1-d})$, we have 
\[
|v_l-\hat v_l|=\cO(\ep\ell^{(d-1)/2}),\quad\forall l. 
\]
Namely
\beqn
\|V-\hat V\|_\infty=\cO(\ep\ell^{(d-1)/2})
\eeqn
and thus
\beqn
\label{71}
\|V-\hat V\|_2=\cO({\ep\over\sqrt{\ell}}). 
\eeqn

\section{Application: inverse scattering}\label{sec2}
In this section, we discuss the main application of
the CJS formulation, i.e.  the TV-min for  inverse scattering problem.

  A monochromatic  wave $u$
propagating in a heterogeneous medium characterized by
a variable refractive index  $n^2(\br)=1+v(\br)$ is governed by the  Helmholtz
  equation 
  \beq
  \label{helm}
  \nabla^2 u(\br)+\om^2(1+v(\br)) u(\br)=0
  \eeq 
  where $v$ describes the medium inhomogeneities. 
 For simplicity,   the wave velocity is assumed to be unity and 
 hence the wavenumber $\om$ equals the frequency.

   Consider the scattering of the incident plane wave
 \beq
 u^{\rm i}(\br)=e^{i\om \br\cdot\bdhat}
 \eeq
 where $\bdhat$ is the incident direction. 
The scattered field $u^{\rm s}=u-u^{\rm i}$
then satisfies 
\beq
\label{scattered}
\nabla^2 u^{\rm s}+\om^2 u^{\rm s}=-\om^2v u 
\eeq
which can be written as the Lippmann-Schwinger equation:
\beq
\label{exact'}
u^{\rm s}(\br)&=&\om^2\int_{\IR^d} v(\br') 
\lt(u^{\rm i}(\br')+u^{\rm s}(\br')\rt) G(\br, \br')d\br' 
\eeq
where $G$ is  the Green function for the operator $-(\nabla^2+\om^2)$. 

The scattered field necessarily satisfies Sommerfeld's radiation condition 
\[
\lim_{r\to\infty} r^{(d-1)/2} \Big({\partial \over \partial r} -i\omega\Big) u^{\rm s}=0
\]
reflecting the fact that the energy which is radiated from the sources represented by the right hand side of (\ref{scattered}) must scatter to infinity. 

Thus the scattered field has
the far-field asymptotic
\beq
u^{\rm s}(\br)={e^{i\om |\br|}\over |\br|^{(d-1)/2}}\lt(A
(\hat\br,\bdhat,\om)+\cO(|\br|^{-1})\rt),\quad \hat\br=\br/|\br|,
\eeq
where $A$ is the scattering amplitude and $d$ the spatial dimension. 
In inverse scattering theory,  
 the  scattering amplitude is the measurement data
 determined by the formula  \cite{CK}
\beqn
\label{sa}
A(\hat\br,\bdhat,\om)&=&{\om^2\over 4\pi}
\int d\br' v(\br') u(\br') e^{-i\om \br'\cdot\hat\br}
\eeqn
which under the Born approximation becomes
\beq
\label{sa2}
A(\hat\br,\bdhat,\om)&=&{\om^2\over 4\pi}
\int d\br' v(\br') e^{i\om \br'\cdot(\bdhat-\hat\br)}
\eeq

For the simplicity of notation we consider 
the  two dimensional case in detail. 
Let $\ID\subset \IZ^2$ be a square sublattice of 
$m$ integral points. Suppose that $s$ point scatterers  are located in 
 a square lattice of spacing $\ell$
\beq
\label{latt}
\ell\ID=\lt\{\br_j=\ell(p_1,p_2):  j=(p_1-1)\sqrt{m}+p_2, \bp=(p_1,p_2)\in \ID\rt\}. 
\eeq
In the context of inverse scattering, it is natural to treat  
the size of the discrete ambient domain $\ell\IL$ being
fixed independent of the resolution length $\ell$. In particular, 
$m\sim \ell^{-2}$ in two dimensions. 

First  let us motivate the inverse scattering sampling scheme
in the case of   {\em point}  scatterers  and 
 let  $v_{j}, j=1,...,m$ be the strength of
the scatterers. In other words, the total object is a sum of $\delta$-functions
\beq
\label{point}
v(\br)=\sum_{j} v_j \delta(\br-\br_j).
\eeq 
Let  $
\cS=\lt\{\br_{i_j}: j=1,...,s\rt\}$
be the
locations of the scatterers. Hence $v_j=0, \forall \br_j\not \in \cS$.

For point objects the scattering amplitude
becomes a finite sum 
\beq
\label{sapt}
A(\hat\br,\bdhat,\om)&=&{\om^2\over 4\pi}
\sum_{j=1}^m v_j 
 e^{i\om \br_j\cdot(\bdhat-\hat\br)}. 
\eeq
In  the Born approximation  the exciting
   field $u(\br_{j})$  is replaced 
   by the incident field $u^{\rm i}(\br_{j})$.  
   
 \section{Sampling schemes}\label{sec3}
 Next we review the sampling schemes introduced in \cite{cis-siso} for {\em point} objects (\ref{point}). 
 
Let $\bdhat_l, \hat\br_l, l=1,...,n$ be various incident and sampling directions for the frequencies $\om_l, l=1,...,n$ to be determined later.
Define  the measurement vector $Y=(y_l)\in \IC^n$ 
with
\beq
\label{4'}
y_l={4\pi\over \om^2\sqrt{n}}A(\hat\br_l,\bdhat_l,\om_l),\quad l=1,...,n. 
\eeq
  The measurement vector is related to the {\em point} object 
vector $X=(v_j)\in \IC^m$  by the sensing matrix  $\bA$  as   
  \beq
Y=\bA X +E
\label{66'}
\eeq
where $E$ is the measurement error. 
  Let  $\theta_l,\tilde\theta_l$ be the polar angles of
  $\bdhat_l,\hat\br_l$, respectively. 
  The $(l,j)$-entry of $\bA\in \IC^{n\times m}$ is
   \beq
   \label{entry}
n^{-1/2}e^{-i\om_l\hat\br_l\cdot\br_j}e^{i\om_l\bdhat_l\cdot \br_j}&=&n^{-1/2}
e^{i\om_l\ell (p_2(\sin{\theta_l}-\sin{\thetatil_l})+p_1(\cos{ \theta_l }-\cos{\thetatil_l}))},\quad j=(p_1-1)+p_2. 
\eeq
Note that  $\bA$ has unit 2-norm columns. 


Let $(\xi_l,\zeta_l)$ be i.i.d. uniform random variables on $[-1,1]^2$ and let  $\rho_l, \phi_l$ be the polar coordinates  as 
in 
\beq
(\xi_l,\zeta_l)=\rho_l (\cos\phi_l,\sin\phi_l),\quad 
\rho_l=\sqrt{\xi_l^2+\zeta_l^2} \leq\sqrt{2}
\eeq
  Let the sampling angle $\tilde\theta_l$ be related
to the incident  angle $\theta_l$ via 
\beq
\label{ch1}
\theta_l+\tilde\theta_l=2\phi_l+\pi,
\eeq
and set the frequency $\om_l$ to be
\beq
\label{ch2}
\om_l= {\Omega \rho_l\over \sqrt{2} \sin{\theta_l-\tilde\theta_l\over 2}}
\eeq
where $\Om$ is a control parameter. 
Then the entries (\ref{entry}) of the sensing matrix $\bA$ 
 under the condition  
\beq
\label{75}
\Omega\ell=\pi/\sqrt{2}
\eeq
are those of  random partial Fourier matrix
\beq
e^{i\pi(p_1\xi_l+p_2\zeta_l)}, \quad
l=1,...,n,\quad p_1, p_2=1,...,\sqrt{m}.\label{17-2}
\eeq

We consider two particular sampling schemes: 
The first one employs multiple frequencies 
with the sampling angle always in the back-scattering direction
resembling the imaging geometry of synthetic aperture radar; the second 
employs only single high frequency with the sampling angle
in the forward
 direction,
resembling  the imaging geometry  of X-ray tomography.

\bigskip

\noindent{\bf I. Backward Sampling} 
This scheme employs  $\Om-$band limited probes, i.e.
$\om_l\in  [-\Om,\Om]$. 
This and (\ref{ch2}) lead to
the constraint:
\beq
\label{const}
{\lt| \sin{\theta_l-\tilde\theta_l\over 2}\rt|}\geq {\rho_l\over\sqrt{2}}. 
\eeq

A simple way  to satisfy (\ref{ch1}) and (\ref{const}) is
to set 
\beq
\label{41}\phi_l&=&\tilde\theta_l=\theta_l-\pi,\\
\label{42} \om_l&=& {\Om\rho_l\over \sqrt{2}}
\eeq
$l=1,...,n$. In this case the scattering amplitude is sampled exactly in 
the backward direction, resembling 
SAR imaging.
 In contrast, the exact forward sampling
with $\tilde\theta_l=\theta_l$ almost surely violates
the constraint  (\ref{const}). 

\bigskip

\noindent {\bf II. Forward Sampling}
 This scheme employs  single  frequency probes no less
 than $\Omega$: 
\beq
\label{20}
\om_l=\gamma\Omega,\quad \gamma\geq 1,\quad   l=1,...,n.\eeq
We set 
\beq
\label{20'}
\theta_l=\phi_l +\arcsin{\rho_l\over \gamma\sqrt{2}}\\
\tilde\theta_l=\phi_l-\arcsin{\rho_l\over \gamma\sqrt{2}}.\label{20''}
\eeq
The difference between the incident angle and
the sampling angle is 
\beq
\label{25}
\theta_l-\tilde\theta_l=2\arcsin{\rho_l\over \gamma\sqrt{2}}
\eeq
which diminishes as $\gamma\to\infty$. In other words, in the high frequency limit, the sampling angle approaches the
incident  angle, resembling  X-ray tomography \cite{Nat}.

\section{Piecewise constant objects}
 \label{sec:tv}
 Next let us consider the following class of  piecewise constant objects:
 \beq
v(\br)&=&\sum_{\bp\in \ID}v_\bp\ind_{\boxdot}({\br\over \ell}-\bp),\quad \boxdot=\lt[-{1\over 2}, {1\over 2}\rt]^2 \label{31'}
\eeq
where $\ind_{\boxdot}$ is the indicator function of the unit square $\boxdot$. As remarked in the Introduction,
we think of  the pixelated $v$ as discrete approximation
of some compactly support function on $\IR^2$ and having
a well-defined limit as $\ell\to 0$. Set $V=(v_j) \in \IC^m, j=(p_1-1)\sqrt{m}+p_2$.

\commentout{
Let  the discrete total variation $\|V\|_{\rm TV}$ be  defined by
\beq
\label{tvm}
\| V\|_{\rm TV}=\sum_{\bp\in \ID} \sqrt{|v_{\bp+\be_1}-v_{\bp}|^2+|v_{\bp+\be_2}-v_{\bp}|^2},\quad\be_1=(1,0),\quad \be_2=(0,1).
\eeq
}

The discrete version of (\ref{sa2}) is, however, not exactly the same as (\ref{sapt}) since extended objects have different scattering properties from those of point objects.

The integral on the right hand side of (\ref{sa2}), modulo the discretization error, is 
\beqn
\int d\br' v(\br') e^{i\om \br'\cdot(\bdhat-\hat\br)}
&=&\sum_{\bp\in \ID} v_\bp e^{\mi \om\ell \bp\cdot (\bdhat-\hat\br)}  \int e^{\mi \om\br'\cdot (\bdhat-\hat\br)}\ind_\boxdot({\br'\over \ell}) d\br'.
\eeqn
Now letting  $\bdhat_l, \hat\br_l, \om_l, l=1,\cdots, n$
be  selected according to Scheme I or II and substituting them in the above equation, we obtain
\beqn
\int d\br' v(\br') e^{i\om_l \br'\cdot(\bdhat_l-\hat\br_l)}
&=&\ell^2\sum_{\bp\in \ID} v_\bp  e^{i\pi (p_1\xi_l+p_2\eta_l)}   \int_{\boxdot} e^{i\pi(x\xi_l+y\eta_l)}dx dy\\
&=&\ell^2\sum_{\bp\in \ID} v_\bp e^{i\pi (p_1\xi_l+p_2\eta_l)}   {2\sin{(\pi \xi_l/2)}\over \pi \xi_l} {2\sin{(\pi \eta_l/2)}\over \pi \eta_l}.
\eeqn

Let 
\[
x_j=\ell^2v_\bp,\quad j=(p_1-1)\sqrt{m}+p_2
\]
and
\[
y_l={4\pi\over \om_l^2 \tilde g_l\sqrt{n}} A(\hat\br_l,\bdhat_l,\om_l)+E_l,\quad l=1,\cdots, n
\]
where
\[
\tilde  g_l=  {2\sin{(\pi \xi_l/2)}\over \pi \xi_l} {2\sin{(\pi \eta_l/2)}\over \pi \eta_l}
\]
where $E=(e_l)$ is the noise vector. 

Define  the sensing matrix $\bA=[\phi_{kp}]$ as
\beq
\phi_{kp}={1\over \sqrt{n}} e^{\mi \pi (p_1\xi_k+p_2\eta_k)},\quad  p=(p_1-1)\sqrt{m}+p_2,\quad  p_1, p_2=1,...,\sqrt{m}. \label{65}
\eeq
Then (\ref{sa}) can be written  in the same form as (\ref{66'})
\beq
\label{66}
Y=\bA X +E,\quad X=(x_j)
\eeq 
where the data and error vectors have been modified as above to account
for the differences between extended and point objects.

Our goal is to establish the performance guarantee for
TV-min
\beq
\label{101}
\min \|Z\|_{\rm TV}
,\quad\hbox{subject to}\quad  \|Y-\bA Z\|_2\leq \|E\|_2.
\eeq
And we accomplish this by transforming (\ref{101}) into  BPDN for CJS (\ref{bp2}).

Define $\bX=(X_1, X_2)$ with
\[
(X_1,X_2)=\ell^2(\Delta_1V , \Delta_2V)
\in \IC^{m\times 2}. 
\]
 Suppose the support of $\{v_{\bp+\be_1}, v_{\bp+\be_2} \}$ is contained in $\ID$. Simple calculation yields  that
\beqn
y_l&=&  {\ell^2\over \sqrt{n}} e^{i\pi\xi_l}\sum_{\bp\in \ID} v_{\bp+\be_1}e^{\mi\pi (p_1\xi_l+p_2\eta_l)}\\
&=&  {\ell^2\over \sqrt{n}} e^{i\pi\eta_l}  \sum_{\bp\in \ID} v_{\bp+\be_2}e^{\mi\pi (p_1\xi_l+p_2\eta_l)}
\eeqn
and thus
\beq
(e^{-\mi\pi\xi_l}-1)y_l&=&  {\ell^2\over \sqrt{n}}  \sum_{\bp\in \ID} (v_{\bp+\be_1}-v_\bp)e^{\mi\pi (p_1\xi_l+p_2\eta_l)}\\
(e^{-\mi\pi\eta_l}-1)y_l&=&  {\ell^2\over \sqrt{n}}  \sum_{\bp\in \ID} (v_{\bp+\be_2}-v_\bp)e^{\mi\pi (p_1\xi_l+p_2\eta_l)}. 
\eeq
Define $\bY=(Y_1, Y_2)$ with
\[
Y_1=\lt( (e^{-\mi\pi\xi_l}-1)y_l\rt),\quad Y_2=\lt(
(e^{-\mi\pi\eta_l}-1)y_l
 \rt) \in \IC^{n}
\]
and $\bE=(E_1, E_2)$ with
\beq
\label{error}
E_1=\lt( (e^{-\mi\pi\xi_l}-1)e_l\rt),\quad E_2=\lt(
(e^{-\mi\pi\eta_l}-1)e_l
 \rt)\in \IC^{n}.
 \eeq
\begin{figure}
 \begin{center}
\includegraphics[width=0.8\textwidth]{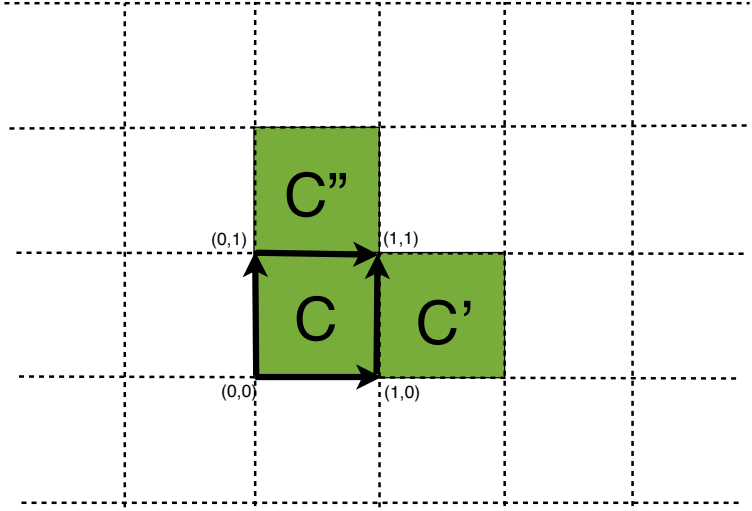}
\end{center}
\caption{Consistency  among cells $C,C'$ and $C''$. } 
\label{fig:grid}
 \end{figure}

We rewrite (\ref{66}) in the form
\beq
\label{67}
\bY=\bA\bX+\bE.
\eeq
subject to the constraint
\beq
\label{curl}\label{c1}
\Delta_1 X_2=\Delta_2 X_1
\eeq
which is the discrete version of curl-free condition.
This ensures that the reconstruction by line integration of $(v_\bp) $
from $\bX$ is consistent (i.e. path-independent).

To see that (\ref{curl}) is necessary and sufficient 
for the recovery  of $(v_\bp) $, consider,  for example, the notations in Figure \ref{fig:grid} and suppose $v_{0,0}$ is known. 
By definition of the difference operators $\Delta_1, \Delta_2$ we have
\beqn
v_{1,0}&=&v_{0,0}+(\Delta_1V)_{0,0}\\
v_{0,1}&=&v_{0,0}+(\Delta_2V)_{0,0}
\eeqn
In general, we can determine $v_\bp, \bp\in \ID$ iteratively
from the relationship
\beqn
v_{\bp+\be_1}&=&v_{\bp}+(\Delta_1V)_{\bp}\\
v_{\bp+\be_2}&=&v_{\bp}+(\Delta_2V)_{\bp}
\eeqn
and the knowledge of  $V$ at any grid point.
The path-independence in evaluating  $v_{p_1+1,p_2+1}$
\beqn
v_{p_1+1,p_2+1}&=v_{p_1,p_2}+(\Delta_1V)_{p_1,p_2}+(\Delta_2V)_{p_1+1,p_2}\\
&=v_{p_1,p_2}+(\Delta_2V)_{p_1,p_2}  +(\Delta_1V)_{p_1, p_2+1}
\eeqn
implies
that 
\[
(\Delta_2V)_{p_1+1,p_2}-(\Delta_2V)_{p_1,p_2} 
=(\Delta_1V)_{p_1, p_2+1}-(\Delta_1V)_{p_1,p_2}
\]
which is equivalent to (\ref{curl}).

Now eq. (\ref{66}) is equivalent to
(\ref{67}) with the constraint (\ref{c1}) provided that
the value of $V$ at (any) one grid point is known.  
The equivalence between the original TV-min (\ref{101})
and the CJS formulation (\ref{bp2}) with $\bA_j=\bA, \forall j$ then hinges
on the equivalence of their respective feasible sets which can be established
under the assumption of Gaussian noise.
When $E$ in (\ref{66}) is Gaussian noise, 
then so is $\bE$ and vice versa, with variances precisely related
to each other.

The random partial Fourier measurement matrix satisfies RIP with $n=\cO(s)$, up to a logarithmic factor \cite{CRT},  while its 
mutual coherence $\mu$ behaves like
$\cO(n^{-1/2})$ \cite{CRS-pt}.   Therefore  (\ref{sparse}) impies  the sparsity constraint $s=\cO(\sqrt{n})$ for the greedy approach which
is more stringent  than $s=\cO(n)$ for the BPDN approach.

\begin{figure}[t]
\begin{center}
\includegraphics[width=0.4\textwidth]{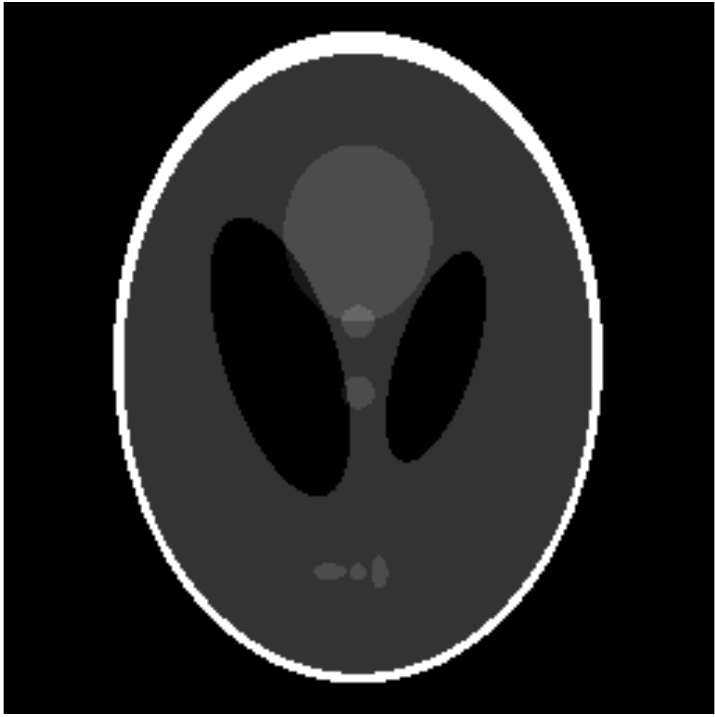}
\includegraphics[width=0.4\textwidth]{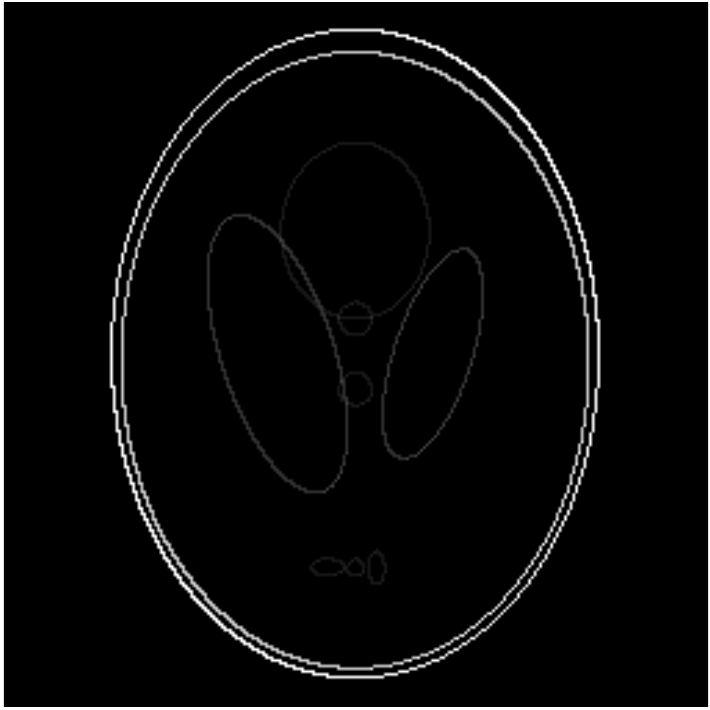}
\end{center}
\caption{The original $256\times 256$ Shepp-Logan phantom  (left), the Shepp-Logan phantom and the magnitudes of its gradient with sparsity $s=2184$.}
\label{fig2}
\end{figure}

\commentout{
To conclude our result about TV-min, let us
quote the following useful estimate for RIC \cite{Rau}. 
\begin{proposition}
Suppose \beq
{n\over \ln{n}}\geq C \delta^{-2}k\ln^2{k} \ln{m} \ln{1\over \alpha},\quad
\alpha\in (0,1)
\eeq
for given sparsity $k$ where $C$ is an absolute constant. 
Then the restricted isometry constant 
of  the matrix (\ref{17-2}) with (\ref{75}) satisfies 
\[
\delta_k\leq \delta
\]
with
probability at least $1-\alpha$.\label{prop2}
\end{proposition}

Using Theorem \ref{thm:bp2} and Proposition \ref{prop2} with 
$\bA_j=\bA$, $k=2s$ and $\delta<\sqrt{2}-1$, 
we obtain the error bound for inverse scattering. 

\begin{theorem} \label{thm:tv}
Let the probe frequencies $\om_l$, the incident angles $\theta_l$
and the sampling angles $\tilde\theta_l$ are
determined by by the Backward or Forward
Sampling scheme. 
 
Suppose 
\beq
\label{75-tv}
\Omega\ell=\pi/\sqrt{2}
\eeq
and suppose 
\beq
\label{77-tv}
{n\over \ln{n}}\geq C_0 \delta^{-2}s\ln^2{s} \ln{m} \ln{1\over \alpha},\quad
\alpha\in (0,1)
\eeq
 holds for some constant $C_0$ and  any $\delta<\sqrt{2}-1$.
Then   the estimate (\ref{70}) holds true 
with probability at least $1-\alpha$.
\end{theorem}
Now suppose the object gradient is $s$-row sparse. Theorem \ref{thm:tv} says that the number of data
 needed for an accurate TV-min solution 
is  $\cO(s)$, modulo logarithmic factors,  which is $\cO(\ell^{-1})$ for piecewise constant
objects in two dimensions.}

\section{Conclusion}\label{sec7}
We have developed a general CS theory (Theorems \ref{thm:bp2} and \ref{thm4}) for
constrained joint sparsity with multiple sensing matrices and obtained performance guarantees
parallel to those for the CS theory for single measurement vector and matrix. 

From the general theory 
we have derived 2-norm error bounds for the object and the gradient,
independent of the ambient dimension, 
for  TV-min 
and greedy estimates  of piecewise constant objects. 
 
In addition,  the CJS greedy algorithm can recover exactly the gradient support (i.e. the edges of the object) leading to
an improved 2-norm  error bound.  Although the CJS greedy algorithm needs a higher number of measurement data than
TV-min for Fourier measurements
the  incoherence property required is much easier to check and often the only practical way to verify RIP when the measurement matrix is not i.i.d. or Fourier.

 
We end by presenting a numerical example
 demonstrating the noise stability of the TV-min. 
Efficient algorithms for TV-min denoising/deblurring 
 exist \cite{BT, WBA}. We use the open source code {\em L1-MAGIC} ({\tt http://users.ece.gatech.edu/\~\,justin/l1magic/}) for our simulation.

Figure \ref{fig2} shows the $256\times 256$  image
of the Shepp-Logan Phantom (left) and the modulus of its gradient (right). Clearly the sparsity ($s=2184$) of
the gradient is much smaller than that of the original image.
We take $10000$ Fourier measurement data for
the L1-min  (\ref{L1})  and TV-min (\ref{tv1})  reconstructions.

 \begin{figure}[t]
\begin{center}
\includegraphics[width=0.3\textwidth]{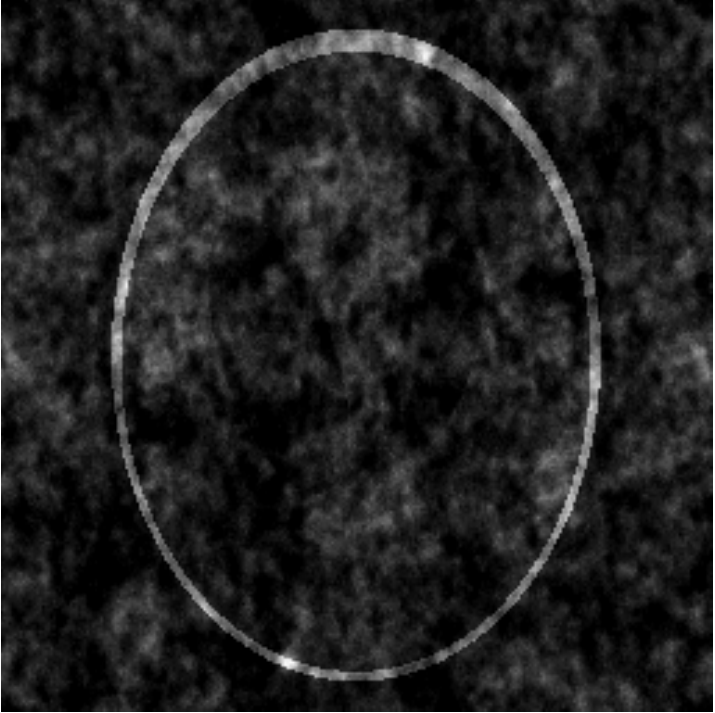}
\includegraphics[width=0.3\textwidth]{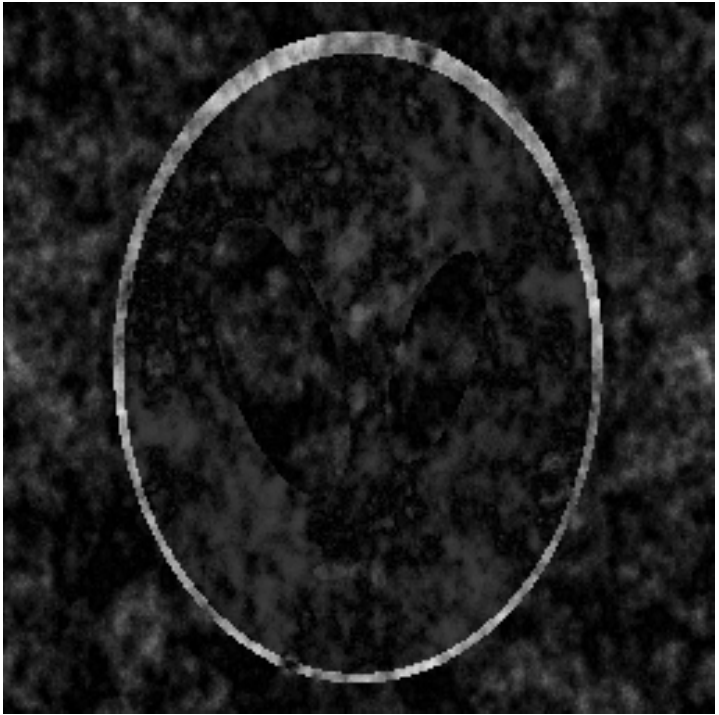}
\includegraphics[width=0.3\textwidth]{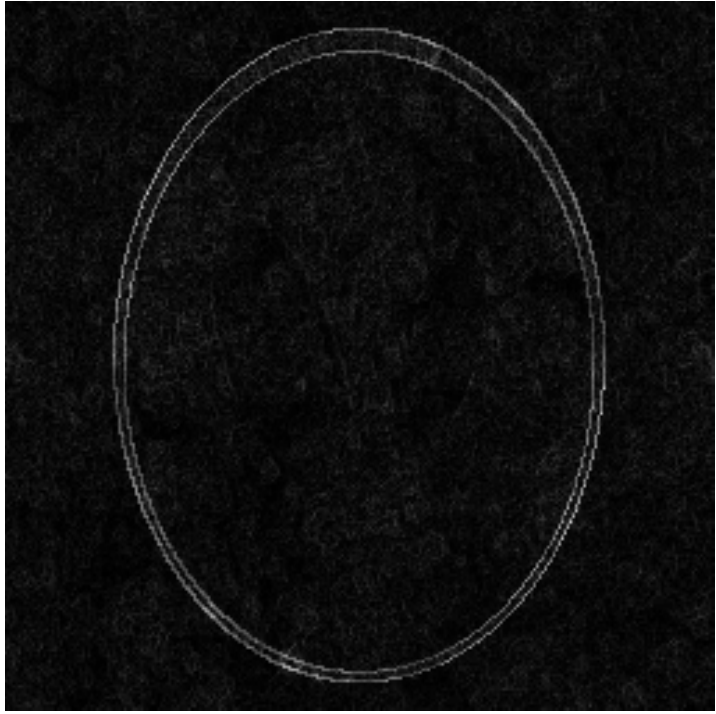}
\end{center}
\caption{Noiseless L1-min reconstructed image (left) and
the differences (middle)  from the original image. The plot on
the right is the gradient of the reconstructed image.}
\label{fig3}
\end{figure}

Because the image is not sparse, L1-min reconstruction
produces a poor result even in the absence of noise,
Figure \ref{fig3}. The relative error is $66.8\%$ in the $L2$ norm and $72.8\%$ in the TV norm.
  Only the outer boundary, which have the largest pixel values,  is reasonably recovered. 

Figure \ref{fig4} shows the results of TV-min reconstruction
in the presence of $5\%$ (top)  or $10\%$ (bottom)  noise.
Evidently, the performance is greatly improved. 
 \begin{figure}[h]
\begin{center}
\commentout{
\includegraphics[width=0.3\textwidth]{phantom-noise5.pdf}
\includegraphics[width=0.3\textwidth]{gradient-noise5.pdf}
\includegraphics[width=0.3\textwidth]{phantom-noise5-difference.pdf}\\
\includegraphics[width=0.3\textwidth]{phantom-noise10.pdf}
\includegraphics[width=0.3\textwidth]{gradient-noise10.pdf}
\includegraphics[width=0.3\textwidth]{phantom-noise10-difference.pdf}
}
\includegraphics[width=0.3\textwidth]{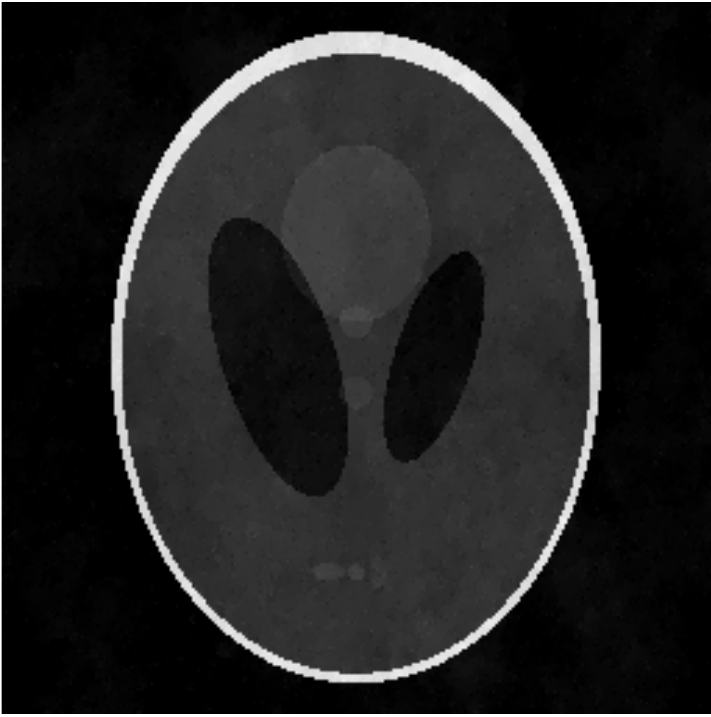}
\includegraphics[width=0.3\textwidth]{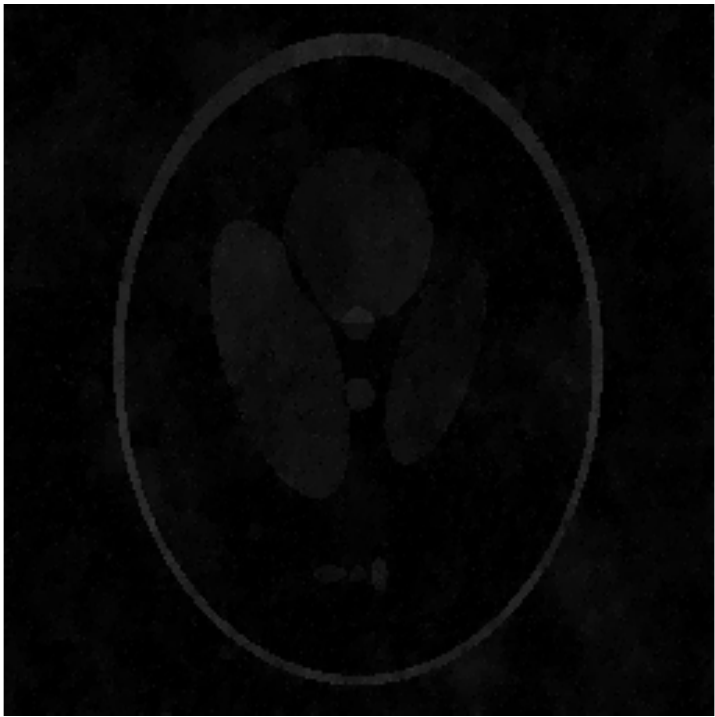}
\includegraphics[width=0.3\textwidth]{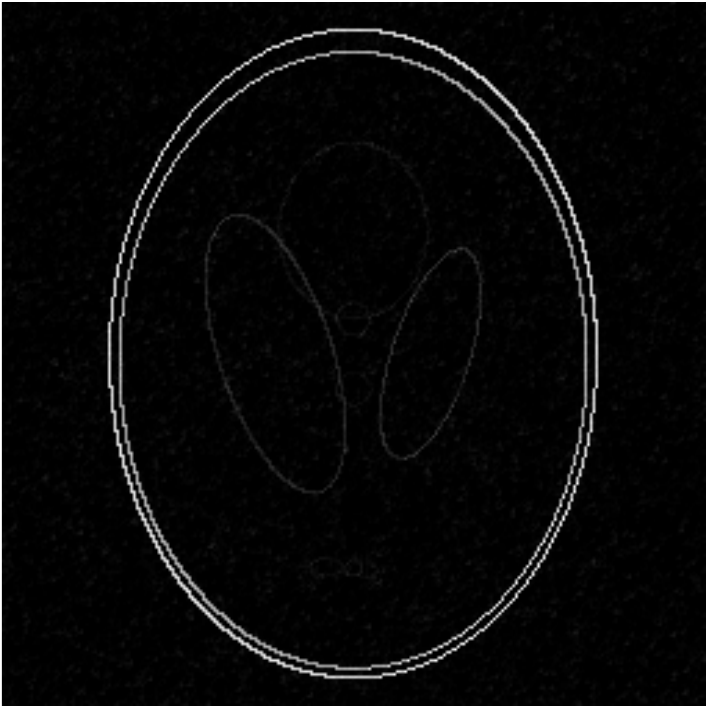}
\includegraphics[width=0.3\textwidth]{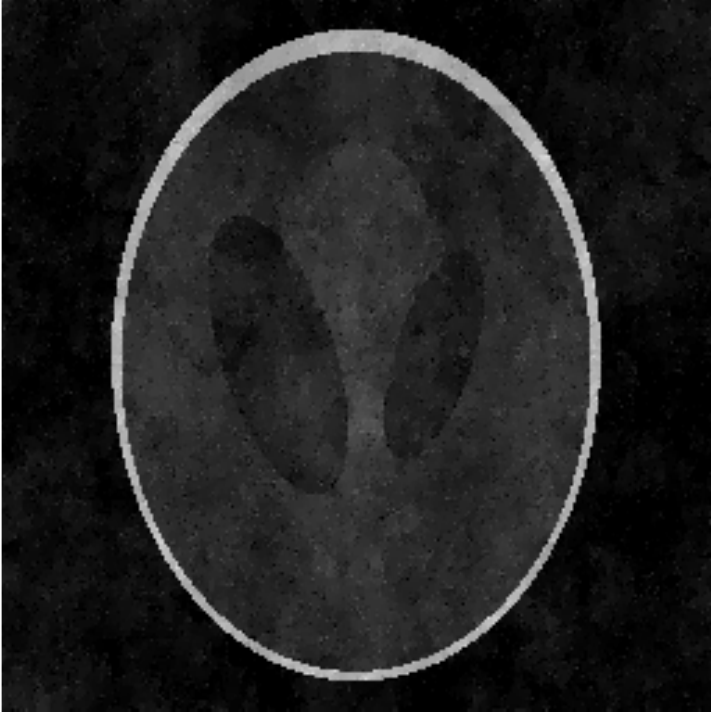}
\includegraphics[width=0.3\textwidth]{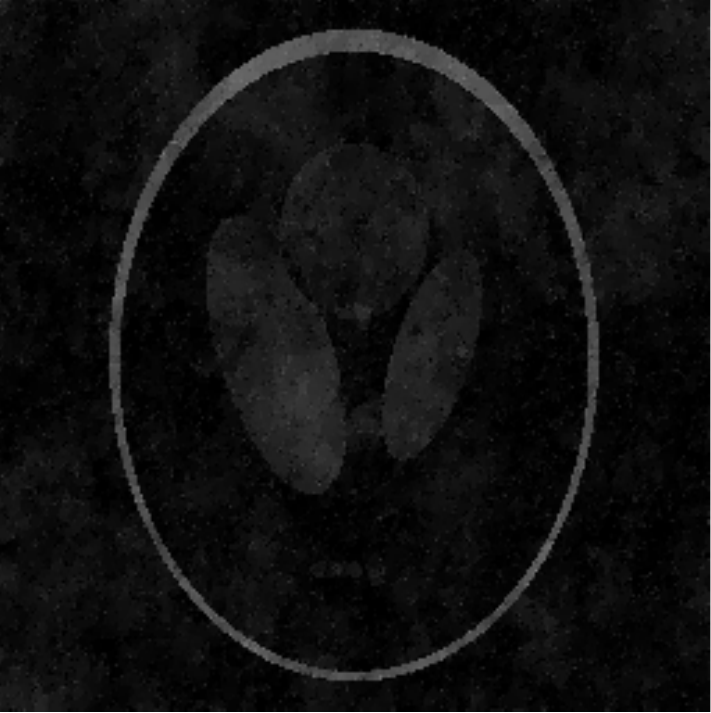}
\includegraphics[width=0.3\textwidth]{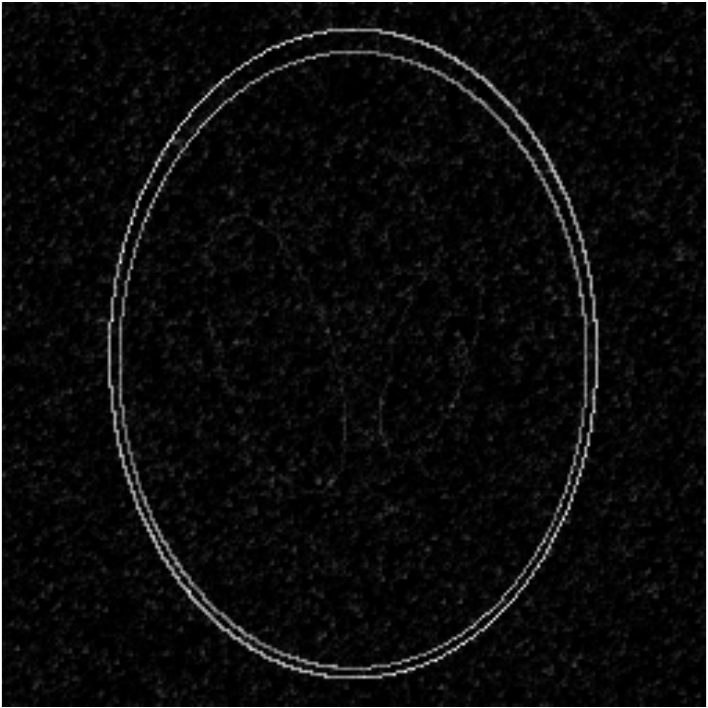}
\end{center}
\caption{TV-reconstructed image  with $5\%$ (top left)
and $10\%$ (bottom left)  and the respective differences (middle)  from the original image. The plots on the right column are the magnitudes of the reconstructed image gradients.  }
\label{fig4}
\end{figure}

\appendix
\section{Proof of Theorem \ref{thm:bp2} }\label{app:B}
The argument is patterned after  \cite{Can} with adaptation to the  CJS setting.
\begin{proposition} We have
\[
\lt|\Re\lan \varphi(\bZ),\varphi(\bZ')\ran\rt|
\leq \delta_{s+s'}\|\bZ\|_{2,2}\|\bZ'\|_{2,2}
\]
for all $\bZ, \bZ'$ supported on disjoint subsets $T,T'\subset \{1,...,m\} $ with $|S|\leq s, |S'|\leq s'.$
\label{lemma1}
\end{proposition}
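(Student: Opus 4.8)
The plan is to transcribe the standard near-orthogonality argument (as in \cite{Can}) to the multi-vector setting, working throughout with the Frobenius inner product $\langle \bZ,\bZ'\rangle \equiv \sum_{j=1}^d \langle Z_j, Z_j'\rangle$ on $\IC^{n\times d}$, where $\langle\cdot,\cdot\rangle$ also denotes the Hermitian inner product on $\IC^n$ and $Z_j, Z_j'$ are the $j$-th columns; note $\|\bZ\|_{2,2}^2=\langle\bZ,\bZ\rangle$. Since $\varphi(\bZ)=[\bA_1Z_1,\dots,\bA_dZ_d]$ is linear in $\bZ$, by homogeneity we may rescale and assume $\|\bZ\|_{2,2}=\|\bZ'\|_{2,2}=1$, so that the claim becomes $|\Re\langle\varphi(\bZ),\varphi(\bZ')\rangle|\le\delta_{s+s'}$. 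The starting point is the polarization identity for the Hermitian Frobenius inner product,
\[
\|\varphi(\bZ)+\varphi(\bZ')\|_{2,2}^2-\|\varphi(\bZ)-\varphi(\bZ')\|_{2,2}^2=4\,\Re\langle\varphi(\bZ),\varphi(\bZ')\rangle ,
\]
together with linearity of $\varphi$, which lets me replace $\varphi(\bZ)\pm\varphi(\bZ')$ by $\varphi(\bZ\pm\bZ')$.

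The key structural point is that disjoint \emph{row} supports $T,T'$ force, for every column index $j$, the vectors $Z_j$ and $Z_j'$ to have disjoint supports in $\IC^n$; hence $\langle Z_j,Z_j'\rangle=0$ and $\|Z_j\pm Z_j'\|_2^2=\|Z_j\|_2^2+\|Z_j'\|_2^2$, and summing over $j$ gives $\|\bZ\pm\bZ'\|_{2,2}^2=\|\bZ\|_{2,2}^2+\|\bZ'\|_{2,2}^2=2$. Moreover the nonzero rows of $\bZ\pm\bZ'$ lie in $T\cup T'$, so $\bZ\pm\bZ'$ is $(s+s')$-row sparse and the RIP hypothesis (\ref{rip}) applies to it, yielding
\[
2(1-\delta_{s+s'})\le\|\varphi(\bZ\pm\bZ')\|_{2,2}^2\le 2(1+\delta_{s+s'}) .
\]

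Inserting these two-sided bounds into the polarization identity squeezes $4\,\Re\langle\varphi(\bZ),\varphi(\bZ')\rangle$ between $2(1-\delta_{s+s'})-2(1+\delta_{s+s'})=-4\delta_{s+s'}$ and $+4\delta_{s+s'}$, so $|\Re\langle\varphi(\bZ),\varphi(\bZ')\rangle|\le\delta_{s+s'}$; undoing the normalization gives the stated inequality. I do not expect a genuine obstacle here — the argument is routine — and the only steps needing care are (i) checking that disjointness of row supports produces honest orthogonality $\langle\bZ,\bZ'\rangle=0$ in the Frobenius inner product, which is exactly what makes $\|\bZ\pm\bZ'\|_{2,2}^2$ split additively as in the scalar case, and (ii) the bookkeeping that row-sparsity adds under disjoint supports, so that the relevant restricted isometry constant is $\delta_{s+s'}$ rather than $\delta_{\max(s,s')}$.
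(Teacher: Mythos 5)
Your proposal is correct and follows essentially the same route as the paper: normalize both multi-vectors, use the disjointness of the row supports to get Frobenius orthogonality and hence $\|\bZ\pm\bZ'\|_{2,2}^2=2$, apply the RIP of order $s+s'$ to $\bZ\pm\bZ'$, and conclude via the polarization (parallelogram) identity. The only difference is that you spell out the column-by-column orthogonality and the additivity of row sparsity, which the paper leaves implicit.
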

\begin{proof} Without loss of generality, suppose that $\|\bZ\|_{2,2}=\|\bZ'\|_{2,2}=1$.  Since $\bZ\perp \bZ'$,  $\|\bZ\pm \bZ'\|^2_{2,2}=2.$ 
Hence we have from  the RIP  (\ref{rip})
\beq
2(1-\delta_{s+s'})\leq
\|\varphi(\bZ\pm \bZ')\|_{2,2}^2\leq 2(1+\delta_{s+s'})\label{a.1}
\eeq
By the parallelogram identity and (\ref{a.1}) 
\[
\lt|\Re\lan \varphi(\bZ), \varphi(\bZ')\ran\rt|=
{1\over 4}\lt|\|\varphi(\bZ)+\varphi(\bZ')\|_{2,2}^2-\|\varphi(\bZ)-\varphi(\bZ')\|_{2,2}^2\rt|
\leq \delta_{s+s'} \]
which proves the proposition.  

\end{proof}

By the triangle inequality and the fact that $\bX$ is in the  feasible
set we have
\beq
\label{a7}
\|\varphi(\hat \bX-\bX)\|_{2,2}\leq \|\varphi(\hat \bX)-\bY\|_{2,2}+\|\bY-\varphi(\bX)\|_{2,2}\leq
2\ep.
\eeq
Set $\hat \bX=\bX+\bD$ and decompose $\bD$ into a sum of 
$\bD_{S_0}, \bD_{S_1}, \bD_{S_2},...,$ each of row-sparsity at most $s$.
Here $S_0$ corresponds to the locations of the $s$ largest rows of $\bX$; $S_1$ the locations of the $s$ largest
rows of $\bD_{S_0^c}$;
$S_2$ the locations of the next $s$ largest rows of
$\bD_{S_0^c}$, and so on. 

{\bf Step (i)}. Define the norm
\[
\|\bZ\|_{\infty,2}=\max_{j}  \|\hbox{\rm row}_j (\bZ)\|_2. 
\]
For $j\geq 2$,
\beqn
\|\bD_{S_j}\|_{2,2}\leq s^{1/2} \|\bD_{S_j}\|_{\infty,2}\leq s^{-1/2}
\|\bD_{S_{j-1}}\|_{2,2}
\eeqn
and hence
\beq
\label{a6}
\sum_{j\geq 2} \|\bD_{S_j}\|_{2,2}\leq s^{-1/2} \sum_{j\geq 1}
\|\bD_{S_j}\|_{1,2}\leq s^{-1/2} \|\bD_{S_0^c}\|_{1,2}.
\eeq
This yields by the Cauchy-Schwarz 
inequality
\beq
\label{a.2}
\|\bD_{(S_0\cup S_1)^c}\|_{2,2}=\|\sum_{j\geq 2} \bD_{S_j}\|_{2,2}
\leq \sum_{j\geq 2} \|\bD_{S_j}\|_{2,2}\leq s^{-1/2} \|\bD_{S_0^c}\|_{1,2}.
\eeq
Also we have
\beqn
\|\bX\|_{1,2}&\geq &\|\hat \bX\|_{1,2}\\
&=&\|\bX_{S_0}+\bD_{S_0}\|_{1,2}
+\|\bX_{S_0^c}+\bD_{S_0^c}\|_{1,2}\\
&\geq&
\|\bX_{S_0}\|_{1,2}-\|\bD_{S_0}\|_{1,2}-\|\bX_{S_0^c}\|_{1,2}
+\|\bD_{S_0^c}\|_{1,2}
\eeqn
which  implies
\beq
\label{a.3}
\|\bD_{S_0^c}\|_{1,2}\leq 2\|\bX_{S_0^c}\|_{1,2}+\|\bD_{S_0}\|_{1,2}.
\eeq
Note that $\|\bX_{S_0^c}\|_{1,2}=\|\bX-\bX^{(s)}\|_{1,2}$ by definition.
Applying (\ref{a.2}), (\ref{a.3})  and the Cauchy-Schwartz inequality to $\|\bD_{S_0}\|_{1,2}$ gives
\beq
\label{a4}
\|\bD_{(S_0\cup S_1)^c}\|_{2,2}\leq \|\bD_{S_0}\|_{2,2}+2e_0
\eeq
where $e_0\equiv s^{-1/2} \|\bX-\bX^{(s)}\|_{1,2}$.

{\bf Step (ii).} Define the inner product 
\beqn
\lan {\mathbf A},{\mathbf B}\ran=
\sum_{i,j}  A^*_{ij}B_{ij}
\eeqn
Observe
\beq\label{38}
&&\|\varphi(\bD_{S_0\cup S_1})\|_{2,2}^2\\
&=&\lan \varphi(\bD_{S_0\cup S_1}),\varphi(\bD)\ran
-\lan \varphi(\bD_{S_0\cup S_1}),\sum_{j\geq 2}
\varphi(\bD_{S_j})\ran\nn\\
&=&\Re \lan\varphi(\bD_{S_0\cup S_1}), \varphi(\bD)\ran
-\sum_{j\geq 2}\Re \lan \varphi( \bD_{S_0\cup S_1}),
\varphi(\bD_{S_j})\ran\nn\\
&=&\Re \lan \varphi(\bD_{S_0\cup S_1}), \varphi(\bD)\ran
-\sum_{j\geq 2}\lt[\Re \lan\varphi(\bD_{S_0}),
\varphi(\bD_{S_j})\ran+ \Re \lan \varphi(\bD_{S_1}),
 \varphi(\bD_{S_j})\ran\rt].\nn
\eeq

From (\ref{a7}) and the RIP (\ref{rip})  it follows that
\beqn
|\lan \varphi(\bD_{S_0\cup S_1}), \varphi(\bD)\ran|&\leq \|\varphi( \bD_{S_0\cup S_1})\|_{2,2}
\| \varphi(\bD)\|_{2,2}
&\leq 2\ep \sqrt{1+\delta_{2s}}\|\bD_{S_0\cup S_1}\|_{2,2}.
\eeqn
Moreover, it follows from Proposition \ref{lemma1}
that
\beq
\label{102}\lt|\Re\lan \varphi( \bD_{S_0}), \varphi(\bD_{S_j})\ran \rt|
&\leq &\delta_{2s} \|\bD_{S_0}\|_{2,2} \|\bD_{S_j}\|_{2,2}\\
\label{103}\lt|\Re\lan \varphi(\bD_{S_1}), \varphi(\bD_{S_j})\ran \rt|
&\leq &\delta_{2s}  \|\bD_{S_0}\|_{2,2} \|\bD_{S_j}\|_{2,2}
\eeq
for $j\geq 2$. Since $S_0$ and $S_1$ are disjoint
\beqn
\|\bD_{S_0}\|_{2,2}+\|\bD_{S_1}\|_{2,2}\leq \sqrt{2}
\sqrt{\|\bD_{S_0}\|^2_{2,2}+\|\bD_{S_1}\|^2_{2,2}}
=\sqrt{2}  \|\bD_{S_0\cup S_1}\|_{2,2}.
\eeqn
Also by (\ref{38})-(\ref{103}) and RIP
\[
(1-\delta_{2s}) \|\bD_{S_0\cup S_1}\|_{2,2}^2
\leq \| \varphi(\bD_{S_0\cup S_1})\|_{2,2}^2\leq
\|\bD_{S_0\cup S_1}\|_{2,2}\lt(2\ep \sqrt{1+\delta_{2s}}
+\delta_{2s}  \sum_{j\geq 2}
\|\bD_{S_j}\|_{2,2}\rt). 
\]
Therefore from (\ref{a6}) we obtain
\beqn
\label{a8}
\|\bD_{S_0\cup S_1}\|_{2,2}\leq \alpha \ep +\rho s^{-1/2}
\|\bD_{S_0^c}\|_{1,2},\quad \alpha={2\sqrt{1+\delta_{2s}}\over
1-\delta_{2s}},\quad \rho={{\sqrt{2}}\delta_{2s}\over
1-\delta_{2s}}
\eeqn
and moreover by (\ref{a.3}) and the definition
of $e_0$ 
\[
\|\bD_{S_0\cup S_1}\|_{2,2}\leq \alpha \ep +\rho 
\|\bD_{S_0}\|_{2,2}+2\rho e_0 
\]
after applying the Cauchy-Schwartz inequality to bound $\|\bD_{S_0}\|_{1,2}$ by $s^{1/2}\|\bD_{S_0}\|_{2,2}$. 
Thus 
\[
\|\bD_{S_0\cup S_1}\|_{2,2}\leq (1-\rho)^{-1}(\alpha \ep+2\rho e_0)
\]
if (\ref{rip}) holds. 

Finally,
\beqn
\|\bD\|_{2,2}&\leq& \|\bD_{S_0\cup S_1}\|_{2,2}+\|\bD_{(S_0\cup S_1)^c}\|_{2,2}\\
&\leq& 2\|\bD_{S_0\cup S_1}\|_{2,2}+2 e_0\\
&\leq&
2(1-\rho)^{-1} (\alpha\ep+ (1+\rho)e_0)
\eeqn
which is the desired result.

\section{Proof of Theorem \ref{thm4}}
  
  We  prove the theorem  by induction. 
 
Let  $\hbox{supp} (\mbx)=\cS=\{J_1,\ldots, J_s\}$
and $$X_{\rm max}=\|\hbox{\rm row}_{J_1}(\bX)\|_1\geq \|\hbox{row}_{J_2} (\bX)\|_1\geq \cdots\geq\|\hbox{\rm row}_{J_s}(\bX)\|_1=X_{\rm min}$$. 
\commentout{
In the first step,
\beq
\nn  \sum_{j=1}^d|\Phi^*_{j,J_1}Y_j|^2 & =& \sum_{j=1}^d|{X}_{J_1j} + X_{J_2j}\Phi_{j,J_1}^*\Phi_{j,J_2} + ... + X_{J_sj}\Phi_{j,J_1}^{*}\Phi_{j,J_s} + \Phi^*_{j,J_1}E_j|^2 \\
 &\geq & \sum_{j=1}^d|{X}_{J_1j}|^2 -2 \sum_j|X^*_{J_1j}(X_{J_2j}\Phi_{j,J_1}^*\Phi_{j,J_2} + ... + X_{J_sj}\Phi_{j,J_1}^{*}\Phi_{j,J_s} + \Phi^*_{j,J_1}E_j)|\nn\\
   &\geq&  X^2_{\rm max}(1 - 2(s-1)\mu_{\rm max}) - 2X_{\rm max}\|\bE\|_{2,2}. \label{14}
\eeq 
On the other hand, $\forall l \notin\hbox{supp}(\mbx)$,
\beq
\label{15}   \sum_{j=1}^d |\Phi_{j,l}^*Y_j|^2  & =&\sum_{j=1}^d |X_{J_1j}\Phi^*_{j,l}\Phi_{j,J_1}+ X_{J_2j}\Phi_{j,l}^{*}\Phi_{j,J_2}+ ... + X_{J_sj}\Phi_{j,l}^{*}\Phi_{j,J_s} + \Phi_{j,l}^*E_j|^2 \\
  & \le& 2sX^2_{\rm max} \mu^2_{\rm max} +2\|\bE\|_{2,2}.\nn
\eeq
}
In the first step,
\beq
 \label{14}  \sum_{j=1}^d |\Phi^*_{j,J_1}Y_j| & =& \sum_{j=1}^d|{X}_{J_1j} + X_{J_2j}\Phi_{j,J_1}^*\Phi_{j,J_2} + ... + X_{J_sj}\Phi_{j,J_1}^{*}\Phi_{j,J_s} + \Phi^*_{j,J_1}E_j| \\
   &\geq&  X_{\rm max} - X_{\rm max}(s-1)\mu_{\rm max} - \sum_j\|E_j\|_2.\nn
\eeq 
On the other hand, $\forall l \notin\hbox{supp}(\mbx)$,
\beq
\label{15}   \sum_{j=1}^d |\Phi_{j,l}^*Y_j|  & =&\sum_{j=1}^d |X_{J_1j}\Phi^*_{j,l}\Phi_{j,J_1}+ X_{J_2j}\Phi_{j,l}^{*}\Phi_{j,J_2}+ ... + X_{J_sj}\Phi_{j,l}^{*}\Phi_{j,J_s} + \Phi_{j,l}^*E_j| \\
  & \le& X_{\rm max} s\mu_{\rm max} +\sum_j\|E_j\|_2.\nn
\eeq
Hence,  if
 \[
 (2s-1)\mu_{\rm max} +  \frac{2\sum_j\|E_j\|_2}{X_{\rm max}} < 1,
 \]
 then the right hand side of (\ref{14}) is greater than
 the right hand side of (\ref{15}) which implies that
 the first  index selected by OMP must belong to $\hbox{supp}(\mbx)$. 

To continue the induction process, we  state the straightforward generalization of a standard  uniqueness result for sparse recovery to
the joint sparsity setting (Lemma 5.3,  \cite{DET}). 

\begin{proposition}
Let $\bZ=\varphi(\bX)$ and $\bY=\bZ+\bE$. Let $\cS^k$ be a set of $k$ indices and let $\mathbf{A}\in\IC^{n\times d}$ with $\supp{(\mathbf{A})}=\cS^k$. Define
\beq
\label{1001}
\bY'=\bY-\varphi(\mathbf{A})
\eeq
and
\[
\bZ'=\bZ-\varphi(\mathbf{A}).
\]
Clearly, 
$\bY'=\bZ'+\bE$. 
If $\cS^k\subsetneq \hbox{supp} (\mbx)$ and  the sparsity $s$ of $\bX$ satisfies $2s<1+\mu_{\rm max}^{-1}$, then
$\bZ'$ has a unique sparsest representation
$\bZ'=\varphi(\bX')$ with the sparsity of $\bX'$ at most $s$.
\label{prop22}
\end{proposition}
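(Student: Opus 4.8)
The plan is to establish the two assertions---existence of an $s$-row-sparse representation of $\bZ'$, and its uniqueness among such representations---separately; uniqueness of the \emph{sparsest} representation then follows, since any representation strictly sparser than the one we exhibit would in particular have row-sparsity at most $s$.

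\textbf{Existence.} Since $\varphi$ is linear and acts column by column, $\varphi(\bX)-\varphi(\mathbf{A})=\varphi(\bX-\mathbf{A})$. The hypothesis $\cS^k\subsetneq\supp(\mbx)$ gives $\supp(\bX-\mathbf{A})\subseteq\supp(\mbx)$, so $\bX':=\bX-\mathbf{A}$ has row-sparsity at most $s$ and satisfies $\varphi(\bX')=\bZ-\varphi(\mathbf{A})=\bZ'$. This $\bX'$ is the natural ``residual object'' that the induction step in the proof of Theorem \ref{thm4} feeds back into the first-step estimates (\ref{14})--(\ref{15}).

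\textbf{Uniqueness.} Suppose $\bZ'=\varphi(\bX'')$ for some $\bX''$ of row-sparsity at most $s$, and set $\bW=\bX'-\bX''$, so that $\varphi(\bW)=0$ and the row support $\Lambda$ of $\bW$ has $|\Lambda|\le 2s$. Column by column, $\varphi(\bW)=0$ means $\bA_j W_j=0$ for each $j=1,\dots,d$, with every $W_j$ supported on $\Lambda$. The Gram matrix of the $\Lambda$-indexed columns of $\bA_j$ has unit diagonal (unit $2$-norm columns) and off-diagonal entries of modulus at most $\mu(\bA_j)\le\mu_{\rm max}$, so Gershgorin's disc theorem bounds its eigenvalues below by $1-(|\Lambda|-1)\mu_{\rm max}\ge 1-(2s-1)\mu_{\rm max}>0$, the last inequality being exactly the hypothesis $2s<1+\mu_{\rm max}^{-1}$. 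Hence $\bA_j$ restricted to $\Lambda$ is injective, $W_j=0$ for every $j$, so $\bW=0$ and $\bX''=\bX'$.

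\textbf{Main difficulty.} I do not expect a genuine obstacle: this is the joint-sparsity, multiple-matrix version of the classical coherence uniqueness bound (Lemma 5.3 of \cite{DET}), and the Gershgorin estimate is applied to each $\bA_j$ in turn and then combined trivially over $j$. The only points needing care are the sparsity bookkeeping---the difference of two $s$-row-sparse objects is $2s$-row-sparse, which is precisely why the hypothesis reads $2s<1+\mu_{\rm max}^{-1}$ rather than $s<1+\mu_{\rm max}^{-1}$---and the observation that the linear constraint $\cL$ never enters, consistent with the fact that Algorithm 1 does not enforce it.
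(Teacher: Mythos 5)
Your proof is correct. The paper does not actually write out a proof of Proposition \ref{prop22} --- it invokes it as ``the straightforward generalization'' of Lemma 5.3 of \cite{DET} --- and your argument (existence via $\bX'=\bX-\mathbf{A}$, whose row support lies in $\supp(\bX)$ because $\cS^k\subsetneq\supp(\bX)$, and uniqueness via the Gershgorin lower bound $1-(2s-1)\mu_{\rm max}>0$ on the Gram matrices of the at most $2s$ relevant columns of each $\bA_j$ separately) is exactly the standard spark/coherence argument behind that lemma, carried over columnwise to the multi-matrix joint-sparsity setting, with the sparsity bookkeeping and the role of the hypothesis $2s<1+\mu_{\rm max}^{-1}$ handled correctly.
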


Proposition \ref{prop22} says  that selection of a column, followed by the formation of the residual signal, leads to a situation like before, where the ideal noiseless signal has no more representing columns than before, and the noise level is the same.

Suppose
that the set $\cS^k\subseteq \supp (\mbx)$ of $k$ distinct indices
has been selected and
that $\mathbf{A}$ in Proposition \ref{prop22} solves the following least squares problem
\beq
\label{ls}
\mathbf{A}=\hbox{arg}\min\|\bY-\bA \bB\|_{2,2},\quad \hbox{s.t.}\quad  \supp (\bB)\subseteq \cS^k
\eeq
{\em without}  imposing the constraint $\cL$.
This is equivalent to the concatenation $\mathbf{A}=[A_j]$ of 
$d$ separate least squares solutions
\beq
\label{ls'}
{A}_j=\hbox{arg}\min_{B_j}\|Y_j-\bA_j B_j\|_{2},\quad \hbox{s.t.}\quad  \supp (B_j)\subseteq \cS^k
\eeq

Let $\bPhi_{j,\cS^k}$ be the column submatrix of $\bA_j$ indexed by the set $\cS^k$. By (\ref{1001}) and (\ref{ls'}),  $\bPhi^*_{j,\cS^k}Y'_j=0,\forall j,$ which implies
that no element of $\cS^k$ gets  selected at
the $(k+1)$-st step. 

In order to ensure that some element in $\supp (\mbx) \setminus \cS^k$ gets selected at the $(k+1)$-st
step we only need to repeat the calculation (\ref{14})-(\ref{15}) to obtain the condition 
\beq
\label{1002}
(2s-1)\mu_{\rm max} + \frac{2\sum_j\|E_j\|_2}{\|X_{J_{k+1}}\|_1} < 1.
\eeq
Since $\sum_j\|E_j\|_2\leq \sqrt{d} \|\bE\|_{2,2}=\sqrt{d}\ep$
(\ref{1002})  follows from  
\beq
\label{snr}
(2s-1)\mu_{\rm max} + \frac{2\sqrt{d}\ep}{X_{\rm min}} < 1
\eeq
which is the same as (\ref{sparse}) and allows us to apply
Proposition \ref{prop22} repeatedly. 

By the $s$-th step, all elements of the support set
are selected and by the nature of the least squares
solution the $2$-norm of the residual is at most $\ep$. 
Thus the stopping criterion is met and the iteration
stops after $s$ steps.

On the other hand, it follows from the calculation
\beqn
\sum_j\|Y'_j\|_2 &\geq & \sum^d_{j=1}\big| \Phi^*_{j,J_{k+1}} Y'_j\big|\\
&=&\sum_{j}\big| X_{J_{k+1}j}+\sum_{i=k+2}^s X_{J_ii} \Phi_{j,J_{k+1}}^*
\Phi_{i,J_i}+\Phi_{j,J_{k+1}}^*E_j\big|\\
&\geq& \|\hbox{\rm row}_{J_{k+1}}(\bX)\|_1-\mu_{\rm max} (s-k-1) \|\hbox{\rm row}_{J_{k+2}}(\bX)\|_1
-\sum_j\|E_j\|_2\\
&\geq& (1-\mu_{\rm max} (s-k-1))\|\hbox{\rm row}_{J_{k+1}}(\bX)\|_1-\sum_j\|E_j\|_2
\eeqn
and (\ref{snr}) (equivalently, 
$X_{\rm min}(1-\mu_{\rm max}(2s-1))>2\sqrt{d}\ep$)
that $\|\bY\|_{1,2}> \sqrt{d}\ep$ for $k=0,1,\cdots, s-1$. Thus
the iteration does not stop until $k=s$. 

Since $\hat\bX$ be the solution of the least squares problem
(\ref{ls2}), we have 
\[
\|\bY-\bA\hat \bX\|_{2, 2}\leq \|\bY-\bA \bX\|_{2, 2}  \leq\ep
\]
 and
\[
\|\bA(\bX-\hat \bX)\|^2_{2,2}\leq 2 \|\bY-\bA\bX\|^2_{2,2}+2\|\bY-\bA\hat \bX\|^2_{2,2}\leq 2 \ep^2
\]
which implies
\[
\|\hat \bX-\bX\|_{2,2}\leq \sqrt{2}\ep/\lambda_{\rm min}
\]
where 
\[
\lambda_{\rm min}= \min_j \{ \hbox{\rm the $s$-th singular
value of the column submatrix of $\bPhi_{j}$ indexed by
$\cS$}\}
\].

The desired error bound (\ref{err}) can now be obtained from
 the following result (Lemma 2.2, \cite{DET}). 
\begin{proposition} 
 Suppose $s<1+\mu(\bA_j)^{-1}$. Every $m\times s$ column submatrix of $\bPhi_j$ has the $s$-th singular value bounded below by $ \sqrt{1-\mu(\bA_j) (s-1)}$.
 \label{prop4}
\end{proposition}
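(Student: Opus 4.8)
The plan is to reduce the claimed lower bound on the $s$-th singular value of a column submatrix of $\bA_j$ to a lower bound on the smallest eigenvalue of its Gram matrix, and then to invoke Gershgorin's disc theorem.

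First I would fix an index set $S\subseteq\{1,\ldots,m\}$ with $|S|=s$ and write $\bA_{j,S}$ for the submatrix of $\bA_j$ obtained by retaining the columns indexed by $S$. The smallest singular value $\sigma_s(\bA_{j,S})$ satisfies $\sigma_s(\bA_{j,S})^2=\lambda_{\min}(G)$, where $G=\bA_{j,S}^*\bA_{j,S}$ is the $s\times s$ Gram matrix; so it suffices to show $\lambda_{\min}(G)\geq 1-\mu(\bA_j)(s-1)$.

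Next I would use the structure of $G$. Since the columns of $\bA_j$ have unit $2$-norm, every diagonal entry of $G$ equals $1$, and by the definition (\ref{mu}) of mutual coherence every off-diagonal entry $G_{ik}$ with $i\neq k$ satisfies $|G_{ik}|\leq\mu(\bA_j)$. Hence each Gershgorin disc of the Hermitian matrix $G$ is centered at $1$ with radius $\sum_{k\neq i}|G_{ik}|\leq(s-1)\mu(\bA_j)$, so every eigenvalue $\lambda$ of $G$ obeys $|\lambda-1|\leq(s-1)\mu(\bA_j)$. The hypothesis $s<1+\mu(\bA_j)^{-1}$ is precisely $(s-1)\mu(\bA_j)<1$, which makes $\lambda_{\min}(G)\geq 1-(s-1)\mu(\bA_j)$ strictly positive; taking the square root gives $\sigma_s(\bA_{j,S})\geq\sqrt{1-\mu(\bA_j)(s-1)}$. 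Since $S$ was an arbitrary $s$-element index set, the proposition follows.

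I do not anticipate a genuine obstacle: this is a short linear-algebra argument whose only non-elementary ingredient is Gershgorin's theorem. The one point worth a line of care is the identity $\sigma_s(\bA_{j,S})^2=\lambda_{\min}(\bA_{j,S}^*\bA_{j,S})$ together with the observation that the coherence hypothesis forces $G\succ0$, hence $\bA_{j,S}$ has full column rank $s$ --- which is exactly what makes the least-squares sub-problems in Algorithm 1 and in (\ref{ls2})/(\ref{ls}) well posed and, combined with the estimate $\|\bA(\bX-\hat\bX)\|_{2,2}^2\le 2\ep^2$ obtained just above, delivers the error bound (\ref{err}) of Theorem \ref{thm4}.
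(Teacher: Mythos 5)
Your proof is correct. The paper does not prove Proposition \ref{prop4} itself --- it simply cites Lemma 2.2 of \cite{DET} --- and your Gershgorin-disc argument on the Gram matrix $G=\bA_{j,S}^*\bA_{j,S}$ (unit diagonal from the normalized columns, off-diagonal entries bounded by $\mu(\bA_j)$, hence $\lambda_{\min}(G)\geq 1-(s-1)\mu(\bA_j)>0$) is exactly the standard argument behind that cited lemma, so there is nothing to add beyond noting that the ``$m\times s$'' in the statement is a typo for $n\times s$, which your proof correctly ignores.
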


By Proposition \ref{prop4},  $\lambda_{\rm min}
\geq  \sqrt{1-\mu_{\rm max} (s-1)}$ and thus 
\[
\|\hat \bX-\bX\|_{2,2}\leq {\sqrt{2}\ep\over \sqrt{1-\mu_{\rm max} (s-1)}}.
\] \\

\bigskip
{\bf Acknowledgement.} I thank  Stan Osher and
Justin Romberg for suggestion of  publishing  this note
at the IPAM workshop ``Challenges in Synthetic Aperture Radar" February 6-10, 2012. 
I thank the anonymous referees and Deanna Needell for pointing out the reference \cite{NW} which
helps me appreciate more deeply the strength and weakness of my approach.
I am grateful to  Wenjing Liao
for preparing Fig. 2-4. The research is partially supported by
the U.S. National Science Foundation  under grant DMS - 0908535. 

\bigskip


\begin{thebibliography}{99} 
            \bibitem{BT}
            A. Beck and M. Teboulle,
"Fast gradient-based algorithms for constrained total variation image denoising and deblurring Problems",
           {\em  IEEE Trans. Image Proc.} {\bf 18} (11), 2419-2434, 2009. 
        


\bibitem{Can}
E. J. Cand\`es, ``The restricted isometry property and its implications for compressed sensing," {\em Compte Rendus de l'Academie des Sciences, Paris, Serie I.} {\bf 346} (2008) 589-592.


\bibitem{CRT}
E. J. Cand\`es, J. Romberg  and  T. Tao, ``Robust uncertainty principle: exact signal reconstruction from highly incomplete frequency information,'' {\em IEEE Trans. Inform. Theory} {\bf  52} (2006), 489 -- 509.

\bibitem{CP}
E.J. Cand\`es and Y. Plan, ``Near-ideal model selection
by $\ell_1$ minimization," {\em Ann. Stat.} {\bf 37} (2009), 2145-2177.

\bibitem{CT}
E. J. Cand\`es and  T. Tao, `` Decoding by linear programming,'' {\em IEEE Trans. Inform. Theory} {\bf  51} (2005), 4203 -- 4215.

\bibitem{Cha}
A. Chambolle, ``An algorithm for total variation minimization
and applications," {\em J. Math. Imaging  Vision} {\bf 20}
(2004), 89-97.
\bibitem{CL}
A. Chambolle and P.-L. Lions,  "Image recovery via total variation minimization
and related problems, "  {\em Numer. Math.} {\bf 76} (1997), 167-188. 

\bibitem{CGM}
T. F. Chan, G. H. Golub, and P. Mulet, ÒA nonlinear primal-dual method for total variation-based image restoration.,Ó {\em SIAM J. Sci. Comput.} {\bf 20} (6), pp. 1964Ð1977, 1999,

\bibitem{CS}

T. Chan and J. Shen, {\em Image Processing And Analysis: Variational, PDE, Wavelet and Stochastic Methods}, Society for Industrial and Applied Mathematics, 2005.

\bibitem{CH06}
J. Chen and X. Huo, ``Theoretical results on sparse representations of mulitple-measurement vectors," {\em IEEE Trans. Signal Proc.} {\bf 54} (2006), 4634-4643. 

\bibitem{CDS}
S.S. Chen, D.L. Donoho and M.A. Saunders,
``Atomic decomposition by basis pursuit," {\em SIAM Rev.}
{\bf 43} (2001), 129-159.

\bibitem{Che}
W.-S. Cheung, 
``Discrete Poincar\'e-type inequalities", {\em Tamkang J. Math.}
{\bf  29 (2)}  (1998), 145-153.
\bibitem{CM} J. F. Claerbout and F. Muir ``Robust modeling with erratic data,"  {\em Geophysics} {\bf 38} (1973), no. 5, 826-844. 

\bibitem{CK}
D. Colton and R. Kress, {\em Inverse Acoustic and Electromagnetic Scattering Theory.} 2nd edition, Springer, 1998.

\bibitem{MMV05}
S.F. Cotter, B.D. Rao, K. Engan and K. Kreutz-Delgado, ``Sparse solutions to linear
inverse problems with multiple measurement vectors," 
{\em IEEE Trans. Signal Proc.}
{\bf  53} (2005), 2477- 2488.

\bibitem{DMA}
G.M. Davis, S. Mallat and M. Avellaneda, ``Adaptive greedy approximations", {\em J. Constructive  Approx. } {\bf 13} (1973), 57-98.

\bibitem{Don}
D. L. Donoho, ``Compressed sensing," {\em IEEE Trans. Inform. Theory} {\bf 52} (2006) 1289 -- 1306.
\bibitem{DE}
D.L. Donoho and M. Elad, ``Optimally
sparse representation in general (nonorthogonal) 
dictionaries via $\ell^1$ minimization,''
{\em Proc. Nat. Acad. Sci. } {\bf 100} (2003),  2197-2202.


\bibitem{DET}
D.L. Donoho, M. Elad and V.N. Temlyakov,
``Stable recovery of sparse overcomplete
representations in the presence of noise,''
{\em IEEE Trans. Inform. Theory} {\bf 52} (2006) 6-18. 

\bibitem{DH}
D.L. Donoho and X. Huo, ``Uncertainty principle
and ideal atomic decomposition, '' {\em IEEE Trans. Inform. Theory} {\bf 47} (2001), 2845-2862.


\bibitem{cis-siso}
A. Fannjiang, `` Compressive inverse scattering II. Multi-shot SISO measurements with Born scatterers,"
{\em Inverse Problems} {\bf 26} (2010), 035009

\bibitem{CRS-pt}
A. Fannjiang, T. Strohmer and P. Yan, ``Compressed Remote Sensing of Sparse Object," {\em  SIAM J. Imag.
Sci.} {\bf 3} (2010) 596-618. 

\bibitem{GV}
G.H. Golub and C.F. Van Loan, {\em Matrix Computations}, 3rd edition. 
The Johns Hopkins University Press,  1996. 



\bibitem{Nat}
F. Natterer, {\em The Mathematics of Computerized Tomography,}
John Wiley \& Sons, 1986.

\bibitem{NW}
D. Needell and R. Ward, ``Stable image reconstruction using total variation minimization," {\tt arXiv}:1202.6429v6, May 31, 2012. 
\bibitem{PMG}
V. Patel, R Maleh, A. Gilbert, and R. Chellappa, `` Gradient-based image recovery methods from incomplete Fourier measurements," {\em  IEEE Trans. Image Process.} {\bf 21} (2012), 94-104.
\bibitem{PRK}
Y.C. Pati, R. Rezaiifar and P.S. Krishnaprasad, ``Orthogonal
matching pursuit: recursive function approximation with applications
to wavelet decomposition," {\em Proceedings of the 27th Asilomar Conference in Signals, Systems and Computers}, 1993.

\bibitem{Rom}
J. Romberg, ``Imaging via compressive sampling," {\em IEEE Sign. Proc. Mag.} {\bf 20}, 14-20, 2008.

\bibitem{RO}
L. Rudin and S. Osher, ``Total variation based image restoration with free local constraints," {\em Proc. IEEE ICIP} {\bf 1} (1994), pp. 31Ð35.
\bibitem{ROF}
L.I. Rudin, S. Osher and E. Fatemi, " Nonlinear total
variation based noise removal algorithms," {\em Physica D}
{\bf 60} (1992) 259-268.

\bibitem{TBM} H. L. Taylor, S. C. Banks, J. F. McCoy, ``Deconvolution with the $\ell$-1 norm," {\em Geophysics} {\bf 44} (1979), no. 1, 39-52.

\commentout{
\bibitem{TG}
J.A. Tropp and A. C. Gilbert,
``Signal recovery from random measurements
via Orthogonal Matching Pursuit,"
{\em IEEE Trans. Inf. Theory}
{\bf 53} (2007)	4655-4666.
}

\bibitem{Tro}
J.A. Tropp, ``Greed is good: algorithmic results for sparse approximation,'' {\em IEEE Trans. Inform. Theory}
{\bf 50} (2004), 2231-2242.

\bibitem{Tropp} J. A. Tropp, A. C. Gilbert, and M. J. Strauss,  ÒAlgorithms for simultaneous sparse approximation. Part I: Greedy pursuit,Ó Signal Process. (Special Issue on Sparse Approximations in Signal and Image Processing) {\bf 86} (2006), 572-588.
 
\bibitem{WBA}
P. Weiss, L. Blanc-F\'eraud and  G.  Aubert,
``Efficient schemes for total variation minimization under constraints in image processing,"
{\em SIAM J. Sci. Comput} {\bf 31} (3), pp. 2047Ð2080, 2009.

\end{thebibliography}
\end{document}